\documentclass{article}

\usepackage{tkz-graph}
\usepackage{amsthm}
\usepackage{graphicx}
\usepackage{framed}
\theoremstyle{definition}
\newtheorem{theorem}{Theorem}[section]
\newtheorem{lemma}[theorem]{Lemma}
\newtheorem{proposition}[theorem]{Proposition}
\newtheorem{definition}[theorem]{Definition}
\newtheorem{remark}[theorem]{Remark}

\newtheorem{corollary}[theorem]{Corollary}

\newcommand{\adn}{\ensuremath{\gamma_{\mathrm{aut}}}}
\newcommand{\edn}{\ensuremath{\gamma_\infty}}
\newcommand{\Acal}{\ensuremath{\mathcal{A}}}
\newcommand{\Bcal}{\ensuremath{\mathcal{B}}}
\newcommand{\Ccal}{\ensuremath{\mathcal{C}}}
\newcommand{\Dcal}{\ensuremath{\mathcal{D}}}
\newcommand{\Ecal}{\ensuremath{\mathcal{E}}}
\newcommand{\Fcal}{\ensuremath{\mathcal{F}}}

\title{Autonomous Domination}
\author{Peter Ulrickson}
\date{}

\begin{document}

\SetGraphUnit{2}

\maketitle
\abstract{The well-known notion of domination in a graph abstracts the idea of protecting locations with guards. This paper introduces a new graph invariant, the autonomous domination number, which abstracts the idea of defending a collection of locations with autonomous agents following a simple protocol to coordinate their defense using only local information.}

\tableofcontents

\section{Introduction}

\subsection{Domination and Variations}

A dominating set in a graph is a subset of vertices satisfying the condition that every vertex is either contained in the subset or is adjacent to such a vertex. One thinks of the members of the set as location at which guards are placed. A guard protects its own location as well as all locations that are immediately adjacent. The domination number of a graph is the least size of a dominating set.

One can ask how well a configuration of guards (i.e. a dominating set) can respond to an attack, or series of attacks, by an adversary. In the first case, that of a single attack, one arrives at the notion of secure dominating set. A dominating set is said to be securely dominating if, given an attack, a guard can move to the attack location and the new guard configuration is still dominating. No condition is made, though, on what might happen in a subsequent attack.

One can consider an arbitrarily long sequence of attacks. This leads to eternal dominating sets and foolproof eternal dominating sets. The former case refers to a configuration of guards which can respond to any series of attacks of arbitrary length under the direction of a strategic planner. The latter refers to a guard configuration which responds to attacks but without any coordination whatsoever, either by a master planner or among the guards. Instead, any guard adjacent to an attack moves to defend it.

There is a natural intermediate notion between eternal domination and foolproof eternal domination, called autonomous domination, and it is the purpose of the present work to introduce and study this idea. Autonomous domination amounts to abstracting the idea of general purpose guards which follow simple rules, communicating with their neighbors. In response to an attack, a guard will move to address the attack, provided that other guards can still cover the locations that become exposed by the guard's movement. In the event that multiple guards can move to the attack, a randomization protocol picks one arbitrarily.

\subsection{Two Basic Examples}
\label{sec:basicExamples}
Consider a simple path on four vertices, defended by two guards, as in the following figure. The guards form a dominating set.
\vspace{.1in}

\begin{center}
\begin{tikzpicture}[scale=.5]
  \Vertex[x=0,y=0,NoLabel]{a}
  \Vertex[x=2,y=0,NoLabel]{b}
  \Vertex[x=4,y=0,NoLabel]{c}
  \Vertex[x=6,y=0,NoLabel]{d}
  \Edges(a,b)
  \Edges(b,c)
  \Edges(c,d)
  \AddVertexColor{gray}{b}
  \AddVertexColor{gray}{d}
\end{tikzpicture}
\end{center}
\vspace{.1in}

Suppose an attack is made at an intermediate vertex, indicated in red below.
\vspace{.1in}

\begin{center}
\begin{tikzpicture}[scale=.5]
  \Vertex[x=0,y=0,NoLabel]{a}
  \Vertex[x=2,y=0,NoLabel]{b}
  \Vertex[x=4,y=0,NoLabel]{c}
  \Vertex[x=6,y=0,NoLabel]{d}
  \Edges(a,b)
  \Edges(b,c)
  \Edges(c,d)
  \AddVertexColor{gray}{b}
  \AddVertexColor{gray}{d}
  \AddVertexColor{red}{c}
\end{tikzpicture}
\end{center}
\vspace{.1in}

Both guards are adjacent to the problem vertex. Only one guard should respond to the attack, though, namely the one to the right. If the guard to the left moves, the defensive position would become unbalanced, exposing the leftmost vertex to subsequent attack. The guards can coordinate their response using only locally available information. The right guard would not lose coverage of any vertex by moving, while the left guard would. Therefore there is a preferred defense that the guards can determine on their own, without higher order coordination.

The preceding example highlights the difference between autonomous domination and what is called foolproof eternal domination. The latter involves guards which respond to attacks without any strategy or coordination at all, so that two guards do not suffice to guard the path, in the foolproof sense.

A second example illustrates the significance of the presence of a master planner. Suppose a graph is defended by three guards, marked in gray in the following figure.

\begin{center}
\begin{tikzpicture}[scale=.5]
  \Vertex[x=0,y=0,NoLabel]{a}
  \Vertex[x=8,y=0,NoLabel]{b}
  \Vertex[x=4,y=-1,NoLabel]{c}
  \Vertex[x=4,y=-3,NoLabel]{p}
  \Vertex[x=4,y=-5,NoLabel]{e}
  \Vertex[x=4,y=-7,NoLabel]{f}
  \AddVertexColor{gray}{a}
  \AddVertexColor{gray}{b}
  \AddVertexColor{gray}{c}
  \Edges(a,b)
  \Edges(a,c)
  \Edges(a,p)
  \Edges(a,e)
  \Edges(a,f)
  \Edges(b,c)
  \Edges(b,p)
  \Edges(b,e)
  \Edges(b,f)
  \Edges(c,p)
  \AddVertexColor{red}{p}
\end{tikzpicture}
\end{center}
The configuration of guards above is a dominating set. Suppose that a problem arises at the red vertex. Each of the three guards is adjacent to the problem vertex. Moreover, if any guard moves to the problem vertex, the new configuration will still be a dominating set. Local information near the guards does not reveal that one movement is preferable. Suppose, though, that the guards move in the following way.

\begin{center}
\begin{tikzpicture}[scale=.5]
  \Vertex[x=0,y=0,NoLabel]{a}
  \Vertex[x=8,y=0,NoLabel]{b}
  \Vertex[x=4,y=-1,NoLabel]{c}
  \Vertex[x=4,y=-3]{p}
  \Vertex[x=4,y=-5,NoLabel]{e}
  \Vertex[x=4,y=-7,NoLabel]{f}
  \AddVertexColor{gray}{p}
  \AddVertexColor{gray}{b}
  \AddVertexColor{gray}{c}
  \Edges(a,b)
  \Edges(a,c)
  \Edges(a,p)
  \Edges(a,e)
  \Edges(a,f)
  \Edges(b,c)
  \Edges(b,p)
  \Edges(b,e)
  \Edges(b,f)
  \Edges(c,p)
\end{tikzpicture}
\end{center}
This is still a dominating set. It is also the case, however, that if a problem arises in one of the bottom two vertices it can not be addressed by the adjacent guard without losing domination.

\begin{center}
\begin{tikzpicture}[scale=.5]
  \Vertex[x=0,y=0,NoLabel]{a}
  \Vertex[x=8,y=0,NoLabel]{b}
  \Vertex[x=4,y=-1,NoLabel]{c}
  \Vertex[x=4,y=-3]{p}
  \Vertex[x=4,y=-5,NoLabel]{e}
  \Vertex[x=4,y=-7,NoLabel]{f}
  \AddVertexColor{gray}{p}
  \AddVertexColor{gray}{e}
  \AddVertexColor{gray}{c}
  \Edges(a,b)
  \Edges(a,c)
  \Edges(a,p)
  \Edges(a,e)
  \Edges(a,f)
  \Edges(b,c)
  \Edges(b,p)
  \Edges(b,e)
  \Edges(b,f)
  \Edges(c,p)
\end{tikzpicture}
\end{center}
The issue is that a `bad choice' was made by the guards earlier in the process. The initial problem  should have been addressed by the guard immediately above. Significantly, that information is not available to guards who only know simple local information about possible loss of dominance. Instead, it requires strategic, managerial thinking about the whole graph and the guards within it.

\section{Definitions}

The first definition, adjacency of dominating sets, makes the main definition clearer.

\begin{definition}
  Given a graph $G$ and dominating sets $S$ and $S'$, we say that $S$ and $S'$ are adjacent if they differ only by a pair of vertices adjacent in $G$: i.e. there are vertices $v\in S$ and $v' \in S'$ such that $(v,v')$ is an edge of $G$ and $S' = (S \setminus \{v\}) \cup \{v'\}$.
\end{definition}

It will be useful at times to use the term `legal guard move' to refer to the pair $(v,v')$ relating two adjacent dominating sets $S$ and $S'$.

Adjacent dominating sets have the same cardinality.

We now proceed to the main definition of the paper. The examples and discussion in the introduction illustrated the notion of domination of a graph subject to ongoing attack in which guards act via a simple protocol to coordinate their movements, but without a master planner. This definition captures the idea of a sufficient collection of autonomous guards.

\begin{definition}
  \label{autonomousFamilyDefinition}
  A collection $\mathcal{F}$ of subsets $S_i \subset V$ is said to be an autonomously dominating family if the following conditions are satisfied.
  \begin{enumerate}
  \item Each subset $S_i$ is a dominating set.
  \item For each subset $S_i$ and each vertex $v \in V \setminus S_i$, there is a subset $S_j \in \mathcal{F}$ which is adjacent to $S_i$ and contains $v$.
  \item For each subset $S \in \mathcal{F}$, every dominating set which is adjacent to $S$ is also in $\mathcal{F}$.
  \end{enumerate}
\end{definition}

The first condition simply records that the guard configuration at a given time is capable of responding to an attack at any vertex. The second condition is that there is always some guard that can respond to an attack without losing domination. The first and second conditions imply that each member of the family $\mathcal{F}$ is in fact secure dominating. The third is `closure under all reasonable movements of autonomous guards.' It says that it must be possible for any guard that can move to the attack without losing graph domination to do so. There is no coordinator who, with an eye to the whole, determines which guard moves. What this would mean in a practical situation is that a randomization would pick one guard to move, if multiple guards are free to do so.

\begin{remark}
Autonomous domination is about defending a graph in perpetuity. As a result the definition of autonomously dominating family does not privilege one of the many configurations as `initial.' Instead all possible arrangements of guards are treated equally. This is the reason for making the definition in terms of a family of subsets, rather than a statement about sequences. A definition using sequences could be made as well but would be more complicated. It is simpler to consider a whole family which is closed under the relation of adjacency for dominating sets.
\end{remark}

Having defined autonomous domination for a family, the notion for an individual set follows.

\begin{definition}
A set of vertices in a graph is said to be an autonomous dominating set if it belongs to an autonomous dominating family.  
\end{definition}

It is now possible to define a graph invariant, the autonomous domination number.

\begin{definition}
The autonomous domination number $\adn(G)$ of a graph $G$ is the least size of an autonomous dominating set.
\end{definition}

Autonomous domination differs from eternal and eternal foolproof domination in ways made evident by the form of the definition. In Definition \ref{autonomousFamilyDefinition}, one obtains eternal domination by dropping the third condition. All that is required is the existence of a possible guard (second condition). The master planner will determine which one moves. Eternal domination was introduced in \cite{IODG} and has been further studied, including in \cite{ESG} and~\cite{TB}. 

On the other hand, the third condition can be strengthened as follows.

\begin{quote}
  
\textbf{Strengthened Third Condition:} For each subset $S_i$, each vertex $v \in V \setminus S_i$, and each vertex $w \in S_i$, if $v$ and $w$ are adjacent, then $(S_i \setminus \{w\}) \cup \{v\}$ is also contained in $\mathcal{F}$.
\end{quote}

This gives foolproof eternal domination, in which guards move without considering whether such movement is prudent. The foolproof eternal domination number itself is very simple to compute (see Lemma \ref{boundedAboveByFoolproof}, derived from  \cite[Theorem 3]{IODG}). Variations have been investigated, such as the situation with multiple simultaneous guard movements \cite{FED}.

Many forms of domination are listed in the appendix of \cite{FDR}, a comprehensive reference on domination in graphs. Eternal domination and variants were introduced subsequent to its publication.

\section{Properties and Computational Methods}

\subsection{Elementary Bounds}

\label{sec:computationalMethods}
The foolproof eternal domination number is $n-\delta$ by \cite[Theorem 3]{IODG}, where $n$ is the order of the graph and $\delta$ is the minimal degree. It was noted above that foolproof eternal domination arises by strengthening the definition of autonomous domination, so that the following inequality is immediate.

\begin{lemma}
  \label{boundedAboveByFoolproof}
  Let $G$ be a graph with order $n$ and minimal degree $\delta$. Then $\adn(G) \le n - \delta$.
\end{lemma}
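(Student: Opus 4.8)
The plan is to exhibit an autonomous dominating set of size $n-\delta$ by showing that the collection of autonomously dominating families contains every foolproof eternal dominating family. Since \cite[Theorem 3]{IODG} guarantees the existence of a foolproof eternal dominating set of size $n-\delta$, such an inclusion immediately yields the stated bound.

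First I would recall, as observed just before the statement, that a foolproof eternal dominating family is precisely a family $\mathcal{F}$ satisfying conditions (1) and (2) of Definition \ref{autonomousFamilyDefinition} together with the Strengthened Third Condition in place of condition (3). The heart of the argument is then the observation that, for a family already satisfying condition (1), the Strengthened Third Condition implies the ordinary third condition. Indeed, let $S \in \mathcal{F}$ and let $S'$ be any dominating set adjacent to $S$. By the definition of adjacency there are vertices $w \in S$ and $v \in V \setminus S$ with $(w,v)$ an edge and $S' = (S \setminus \{w\}) \cup \{v\}$. The Strengthened Third Condition applies verbatim to this triple and places $S'$ in $\mathcal{F}$. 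Hence every dominating set adjacent to $S$ lies in $\mathcal{F}$, which is exactly condition (3).

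Consequently any foolproof eternal dominating family is also an autonomously dominating family, and so every foolproof eternal dominating set is an autonomous dominating set. Taking the foolproof eternal dominating set of size $n-\delta$ supplied by \cite[Theorem 3]{IODG}, we obtain an autonomous dominating set of that size, whence $\adn(G) \le n-\delta$.

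I do not anticipate a serious obstacle here; the only point requiring care is conceptual rather than technical. The Strengthened Third Condition is phrased for every guard move along an edge, including moves that might destroy domination, whereas condition (3) speaks only of adjacent dominating sets. One should note that this discrepancy is harmless for the implication above: the strengthened condition is invoked only to capture the adjacent dominating sets, and the (possibly) non-dominating images it also forces into $\mathcal{F}$ play no role in verifying condition (3). The fact that a foolproof family of size $n-\delta$ exists at all—that is, that at this size every guard move preserves domination—is precisely the content of the cited theorem, which I would use as a black box.
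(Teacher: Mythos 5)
Your proposal is correct and is essentially the paper's own argument: the paper derives the bound immediately from the observation that foolproof eternal domination is obtained by replacing condition (3) with the Strengthened Third Condition, so every foolproof eternal dominating family is an autonomously dominating family, and then invokes \cite[Theorem 3]{IODG} for the value $n-\delta$. You have merely spelled out the implication (strengthened condition plus condition (1) gives condition (3)) that the paper treats as immediate, which is a faithful elaboration rather than a different route.
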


The upper bound can be attained. For example, the complete graphs attain the upper bound. 

It is also evident that the autonomous domination number is bounded below by the eternal domination number, since an eternal dominating set is subject to fewer conditions. In eternal domination a `manager' is able to have an eye to the whole of the network being defended, and this is reflected in the following inequality.

\begin{lemma}
  \label{boundedBelowByEternal}
  The autonomous domination number is bounded below by the eternal domination number.
\end{lemma}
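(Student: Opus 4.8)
The plan is to observe that the required inequality is essentially a matter of unwinding the definitions, exploiting the fact — recorded in the paragraph following Definition~\ref{autonomousFamilyDefinition} — that eternal domination is obtained from autonomous domination by dropping the third condition. Concretely, I would first pin down the family-theoretic description of eternal domination: an \emph{eternally dominating family} is a collection $\Fcal$ of subsets satisfying conditions (1) and (2) of Definition~\ref{autonomousFamilyDefinition}, and an \emph{eternal dominating set} is one belonging to such a family, with $\edn(G)$ the least size of an eternal dominating set.

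Next I would make the one-line structural comparison. Any family satisfying all three conditions of Definition~\ref{autonomousFamilyDefinition} in particular satisfies conditions (1) and (2). Hence every autonomously dominating family $\Fcal$ is, a fortiori, an eternally dominating family. Consequently, if $S$ is an autonomous dominating set, then $S$ lies in some autonomously dominating family $\Fcal$; this $\Fcal$ is also an eternally dominating family, so $S$ is an eternal dominating set as well.

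The conclusion then follows by a monotonicity argument on minima. The previous step shows that the collection of autonomous dominating sets of $G$ is contained in the collection of eternal dominating sets of $G$. Since the minimum cardinality taken over a subcollection is at least the minimum taken over the full collection, we obtain $\edn(G) \le \adn(G)$, which is the claim.

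I do not expect a genuine obstacle here; the argument is a direct containment of families followed by a comparison of minima. The only point requiring care is the bookkeeping of definitions: one must confirm that the family-based formulation of eternal domination (conditions (1) and (2) alone) genuinely coincides with the standard notion of eternal domination referenced in the cited literature, so that the inclusion of set-collections is legitimate rather than merely formal. Once that correspondence is granted — as the surrounding discussion asserts — the inequality is immediate.
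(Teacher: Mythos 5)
Your proposal is correct and takes essentially the same approach as the paper: the paper offers no formal proof at all, merely the remark preceding the lemma that ``an eternal dominating set is subject to fewer conditions,'' which is precisely your observation that dropping condition (3) of Definition~\ref{autonomousFamilyDefinition} turns every autonomously dominating family into an eternally dominating one, so the collection of autonomous dominating sets is contained in the collection of eternal dominating sets and the minima compare accordingly. Your closing caveat---that the family-based formulation of eternal domination must be checked against the literature's strategy-based definition---is a legitimate point that the paper glosses over, and your write-up is if anything more careful than the original.
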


\subsection{Some means of computation}

An adversary can attack at a series of independent vertices. Each such attack must be addressed by a separate guard, since the locations are independent. The next lemma records this.

\begin{lemma}
  \label{lemma:includeIndependent}
  Let $G$ be a graph and $\mathcal{F}$ an autonomous family of dominating sets. Given any independent set $I$ of vertices of $G$, there is a set $S$ in the automonous family such that $I \subset S$.  
\end{lemma}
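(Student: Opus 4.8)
The plan is to argue by induction on the size of the independent set $I$, building up a single member of $\mathcal{F}$ that contains ever more vertices of $I$ by simulating a sequence of attacks against the vertices of $I$ one at a time. Concretely, I would enumerate $I = \{v_1, \dots, v_k\}$ and prove by induction on $j$ that some set in $\mathcal{F}$ contains $\{v_1, \dots, v_j\}$. For the base case $j = 0$ I would simply take any member of the (nonempty) family, which trivially contains the empty prefix.

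For the inductive step, suppose $S \in \mathcal{F}$ contains $\{v_1, \dots, v_j\}$. If $v_{j+1} \in S$ there is nothing to do. Otherwise $v_{j+1} \in V \setminus S$, so the second condition of Definition \ref{autonomousFamilyDefinition} supplies a set $S' \in \mathcal{F}$ adjacent to $S$ with $v_{j+1} \in S'$. By the definition of adjacency we may write $S' = (S \setminus \{x\}) \cup \{v_{j+1}\}$ for some vertex $x$ adjacent to $v_{j+1}$.

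The crux of the argument — and the one place where independence of $I$ is used — is to check that the vacated vertex $x$ is none of $v_1, \dots, v_j$. Since $x$ is adjacent to $v_{j+1}$, while each $v_i$ with $i \le j$ lies in the independent set $I$ together with $v_{j+1}$, we cannot have $x = v_i$: that would make two distinct members of $I$ adjacent. Hence deleting $x$ from $S$ removes none of the previously placed guards, so $\{v_1, \dots, v_j\} \subseteq S'$ and therefore $\{v_1, \dots, v_{j+1}\} \subseteq S'$. This completes the induction, and taking $j = k$ yields a set $S \in \mathcal{F}$ with $I \subseteq S$.

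I expect the only subtle point to be confirming that a single legal guard move cannot dislodge a guard already stationed on a vertex of $I$; once that observation is isolated, the monotone progress of the induction is immediate. I would also remark that the third (closure) condition of Definition \ref{autonomousFamilyDefinition} is not needed here, since the second condition already places the witnessing adjacent set inside $\mathcal{F}$, and that the family is tacitly assumed nonempty so that the base case has a set to start from.
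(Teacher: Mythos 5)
Your proof is correct and follows essentially the same route as the paper: both arguments add the missing vertices of $I$ one at a time via the second condition of Definition \ref{autonomousFamilyDefinition}, using independence of $I$ to conclude that the vacated vertex, being adjacent to the newly occupied one, cannot belong to $I$. Your inductive phrasing and the explicit check that $x \ne v_i$ is just a more carefully written version of the paper's iterative construction.
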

\begin{proof}

  Let $S'$ be an element of $\mathcal{F}$. Let $S_0 = S' \cap I$, and define  $T = I \setminus S_0$. If $T$ is empty there is nothing to show. Suppose $T$ non-empty, and enumerate its elements $T = \{t_1,t_2,\dots,t_n\}$.

  By the definition of autonomous family, there is a dominating set $S_1$ adjacent to $S'$ which also contains $t_1$. By the independence of $I$, the guard moving to $t_1$ does not come from a vertex in $S_0$. Continuing in this manner, we find a sequence of dominating sets $S_i\in \mathcal{F}$ such that $S_i$ is adjacent to $\S_{i-1}$ and $|S_i \cap I| = |S_{i-1} \cap I| + 1$. Ultimately we arrive at $S_n$, which includes all vertices of $I$. This is the set $S$ that was sought.
\end{proof}

When considering concrete examples the following corollary is useful. When a graph contains an independent dominating set, it is easy to explore possible guard configurations.

\begin{corollary}
  \label{cor:supersetOfIndependent}
  Suppose that $G$ is a connected graph, $\mathcal{F}$ an autonomous dominating family, and $I$ an independent dominating set. Suppose further that the subgraph induced by the complement of $I$ is connected. Then any set $T$ of vertices of $G$  which contains $I$ and has the same cardinality as elements of $\mathcal{F}$ is contained in the autonomous family $\mathcal{F}$.
\end{corollary}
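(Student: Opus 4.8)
The plan is to use Lemma \ref{lemma:includeIndependent} to obtain a single anchor set $S \in \mathcal{F}$ containing $I$, and then to transform $S$ into $T$ through a sequence of legal guard moves that never disturbs the guards sitting on $I$. The governing observation is that because $I$ is itself a dominating set, \emph{every} superset of $I$ is automatically dominating. Writing $W = V \setminus I$, this means that for any $U \subseteq W$ the set $I \cup U$ is a dominating set, so the only thing that must be checked along the way is that consecutive configurations are adjacent in the sense of the first definition; condition (3) of Definition \ref{autonomousFamilyDefinition} then keeps every intermediate set inside $\mathcal{F}$.

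First I would apply Lemma \ref{lemma:includeIndependent} to the independent set $I$ to produce $S \in \mathcal{F}$ with $I \subseteq S$. Set $A = S \setminus I$ and $B = T \setminus I$, both subsets of $W$. Since $S$ and $T$ share the common cardinality of members of $\mathcal{F}$ and both contain $I$, we get $|A| = |B|$. If $|A| = |W|$ then $A = B = W$ and $S = T$, so there is nothing to prove; thus I may assume $|A| = |B| < |W|$, which guarantees that at least one vertex of $W$ is unoccupied at every stage.

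The core is a token-sliding statement: the extra guards, regarded as $|A|$ indistinguishable tokens placed on $W$, can be moved from the positions $A$ to the positions $B$ by a sequence of single slides along edges of the induced subgraph $G[W]$ into currently empty vertices. Each such slide carries $I \cup U$ to $I \cup U'$, where $U' = (U \setminus \{x\}) \cup \{y\}$ for an edge $(x,y)$ of $G[W]$; both sets are dominating (being supersets of $I$) and adjacent, so if $I \cup U \in \mathcal{F}$ then $I \cup U' \in \mathcal{F}$ by condition (3). Starting from $S = I \cup A \in \mathcal{F}$ and inducting along the sequence, every configuration — and in particular the final one $T = I \cup B$ — lies in $\mathcal{F}$.

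The main obstacle is establishing the token-sliding fact, and here the hypothesis that $G[W]$ is connected is exactly what is needed. Since a slide is reversible, reachability is an equivalence relation on configurations, so it suffices to show that every configuration of $|A|$ indistinguishable tokens on $W$ can be brought to a fixed canonical one. I would argue this by induction on $|W|$, peeling off a leaf $\ell$ of a spanning tree of $G[W]$ and using the guaranteed empty vertex to shuttle a token onto or off $\ell$ so as to match the canonical occupancy there, then freezing $\ell$ and applying the inductive hypothesis to the still-connected graph $G[W] - \ell$ (the bookkeeping on the number of remaining holes is the only point requiring care). Because the tokens are indistinguishable, none of the parity or group-theoretic obstructions of the classical labeled fifteen-puzzle analysis arise, so connectivity together with the presence of a single hole suffices.
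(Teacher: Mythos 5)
Your proposal is correct and follows essentially the same route as the paper: anchor a set $S \in \mathcal{F}$ containing $I$ via Lemma \ref{lemma:includeIndependent}, then slide the remaining guards within the induced subgraph on $V \setminus I$, every intermediate configuration being dominating because it contains $I$, so that condition (3) of Definition \ref{autonomousFamilyDefinition} keeps each one in $\mathcal{F}$. The only difference is one of rigor: where the paper's proof simply says the remaining guards ``move along paths'' to the elements of $T$, you supply the unlabeled token-sliding argument (connectivity of the complement plus at least one empty vertex) that justifies why such a rearrangement is always achievable despite guards potentially blocking one another.
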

\begin{proof}
  By Lemma \ref{lemma:includeIndependent} there is some $S \in \mathcal{F}$ which contains $I$. Let the remaining guards move along paths from their initial locations to the remaining elements of $T$. These constitute a series of legal guard moves.
\end{proof}

A usful bound arises in the case that all dominating sets of a given size are secure dominating. 

\begin{theorem}
  \label{thm:secDomBound}
  If every dominating set of size $k$ is secure dominating, then $\adn \le k$.
\end{theorem}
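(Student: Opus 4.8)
The plan is to exhibit a single autonomously dominating family that contains every dominating set of size $k$, from which the bound follows at once. Concretely, I would take $\mathcal{F}$ to be the family of \emph{all} dominating sets of $G$ of cardinality exactly $k$, and verify the three conditions of Definition~\ref{autonomousFamilyDefinition}. Once $\mathcal{F}$ is shown to be an autonomously dominating family, every one of its members is by definition an autonomous dominating set of size $k$, so $\adn(G) \le k$.

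Condition 1 holds by construction, since every element of $\mathcal{F}$ is a dominating set. Condition 3 is nearly as immediate: if $S \in \mathcal{F}$ and $S'$ is a dominating set adjacent to $S$, then $S'$ has the same cardinality as $S$ (as recorded just after the definition of adjacency), namely $k$, so $S' \in \mathcal{F}$.

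The heart of the argument is Condition 2, and this is exactly where the hypothesis is used. Fix $S \in \mathcal{F}$ and a vertex $v \in V \setminus S$. Because $|S| = k$, the hypothesis guarantees that $S$ is secure dominating, so the attack at $v$ can be met: there is a guard $w \in S$ adjacent to $v$ for which $S' = (S \setminus \{w\}) \cup \{v\}$ is again a dominating set. I would then observe that this single $S'$ satisfies every requirement of Condition 2 simultaneously — it is a dominating set of size $k$ (hence lies in $\mathcal{F}$), it is adjacent to $S$ via the edge $(w,v)$, and it contains $v$.

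The main obstacle I anticipate is not the three verifications, which are short, but handling the degenerate case cleanly: if $G$ has no dominating set of size $k$ then $\mathcal{F}$ is empty and the conclusion is vacuous — indeed $\adn(G) \le k$ is simply false whenever $k < \gamma(G)$, since every autonomous dominating set is in particular a dominating set. I would therefore state at the outset that $k$ is assumed large enough that dominating sets of size $k$ exist, which is the natural standing hypothesis, as otherwise there is no candidate set to begin with. The only other point requiring care is to confirm that the set $S'$ produced by secure domination is adjacent to $S$ in the precise sense of the adjacency definition, i.e. obtained from $S$ by a single guard move along an edge; this is clear from its form $S' = (S \setminus \{w\}) \cup \{v\}$ with $(w,v)$ an edge of $G$.
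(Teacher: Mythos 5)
Your proposal is correct and is exactly the paper's argument: the paper's entire proof is ``Define an autonomous family consisting of all dominating sets of size $k$,'' and you have simply filled in the three verifications it leaves implicit. Your remark about the degenerate case (no dominating set of size $k$, making the hypothesis vacuous but the conclusion false for $k < \gamma(G)$) is a legitimate point of care that the paper glosses over, and your resolution --- assuming such sets exist --- is the right one.
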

\begin{proof}
  Define an autonomous family consisting of all dominating sets of size $k$.
\end{proof}

The converse does not hold. Consider a $K_3$ with two leaves from one vertex. The autonomous domination number of this graph is $3$, but the set consisting of the 3 $K_3$ vertices is dominating but not secure dominating.

\subsection{A partition count}

The eternal domination number is bounded above by a count of certain subcliques of a graph, given in \cite[Theorem 4]{IODG}. More specifically, if the vertices of a graph can be divided among $c$ subsets such that the induced graph on each subset is complete, then the eternal domination number is bounded above by $c$. 

This partition number $c$, though bounding the eternal domination number, does not provide a bound of $\adn$. Observe that a path $P_n$ can be partitioned (by taking adjacent pairs of vertices, with possibly one left over) into $\lceil \frac{n}{2} \rceil$ such subsets, all of whose induced graphs are complete. On the other hand, $\adn(P_n)$, computed in Proposition \ref{prop:adnPath}, exceeds this bound by an amount growing arbitrarily large as $n$ increases.

There is, however, the following technical proposition involving a partition of the set of vertices, which determines the autonomous domination number at least in some cases. It will be applied in Section \ref{sec:Families} to compute autonomous domination numbers of products of complete graphs.

\begin{proposition}
\label{partitionBound}
  Suppose that a graph $G$ is such that the vertices of $G$ may be partitioned into $k$ sets $S_i$ so that the induced subgraph on each $S_i$ is complete. Suppose further that $|S_i| > k$ for each $i$, and that there is at most one edge from a vertex in one partition $S_i$ to some other partition $S_j$. Then $\adn(G) = k$.  
\end{proposition}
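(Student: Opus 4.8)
The plan is to prove the two inequalities $\adn(G)\ge k$ and $\adn(G)\le k$ separately. For the lower bound I would show that every dominating set already has size at least $k$. Fix a part $S_i$ and a dominating set $D$, and suppose $D\cap S_i=\emptyset$. Then every vertex of $S_i$ must be dominated from outside, i.e.\ by a vertex of $D$ lying in some other part $S_j$. By hypothesis there is at most one edge between $S_i$ and each other part, so the external guards dominate at most one vertex of $S_i$ per part, hence at most $k-1$ vertices of $S_i$ in total. Since $|S_i|>k>k-1$ this is impossible, so $D$ meets every part $S_i$. As the parts are disjoint, $|D|\ge k$. Because every autonomous dominating set is in particular a dominating set, this gives $\adn(G)\ge k$.

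For the upper bound I would invoke Theorem \ref{thm:secDomBound}. First observe that, since the parts are cliques covering $V$, any set $T=\{v_1,\dots,v_k\}$ containing exactly one vertex $v_i$ from each $S_i$ is automatically dominating, as $v_i$ dominates all of the clique $S_i$. Combined with the lower-bound counting this pins down the size-$k$ dominating sets exactly: a dominating set of size $k$ must meet each of the $k$ disjoint parts and therefore, by pigeonhole, contains precisely one vertex from each, so the size-$k$ dominating sets are exactly these transversals. It then remains to check that each transversal $T$ is secure dominating. Given an attack at a vertex $v$, there is nothing to do if $v\in T$; otherwise $v$ lies in some part $S_m$ with $v\ne v_m$, and since $S_m$ is a clique the guard $v_m$ is adjacent to $v$ and may move there, the result being again a transversal and hence dominating. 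Thus every dominating set of size $k$ is secure dominating, and Theorem \ref{thm:secDomBound} yields $\adn(G)\le k$. The two bounds together give $\adn(G)=k$.

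The crux of the argument is the lower bound, which is the only place the hypotheses on part sizes and cross edges are used. The point to get right is that the sole way to dominate a part $S_i$ without spending a guard inside it is through cross edges, and that the ``at most one edge between two parts'' condition caps the external domination of $S_i$ at $k-1<|S_i|$. Once this counting is secure, the upper bound is essentially routine: transversals are automatically dominating, intra-clique moves preserve the transversal property, and the secure-domination criterion of Theorem \ref{thm:secDomBound} does the rest. The remaining care is only bookkeeping---confirming that the external domination of a single part is genuinely bounded by the number of \emph{other} parts rather than by the number of external guards.
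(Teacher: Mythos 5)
Your proof is correct, and while it rests on the same counting fact as the paper's --- a part $S_i$ containing no guard can be dominated from outside in at most one vertex per other part, hence in at most $k-1 < |S_i|$ vertices --- you organize the argument around a genuinely different key lemma. The paper never invokes Theorem \ref{thm:secDomBound} here: it fixes a single transversal $S$ and verifies the autonomous-family axioms directly for the family generated by $S$ under legal guard moves, the crux being the closure condition, proved by showing that a guard can never legally leave its own part (if the guard at $w \in S_j$ moves along the unique cross edge into $S_i$, the remaining vertices of $S_j$ outnumber the guards available to cover them); the lower bound is then dispatched with a one-line ``similar reasoning'' remark. You instead apply the counting once, to show that every dominating set meets every part; this simultaneously gives an explicit lower bound $\gamma(G) \ge k$ (hence $\adn(G) \ge k$) and the identification of the size-$k$ dominating sets with the transversals, after which security is just an intra-clique move and Theorem \ref{thm:secDomBound} absorbs the closure condition automatically, since adjacent dominating sets have equal cardinality. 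Your route is more economical and makes the lower bound explicit where the paper only gestures; note also that your counting only ever needs $|S_i| \ge k$ rather than $|S_i| > k$, so it establishes the proposition under a nominally weaker hypothesis. What the paper's direct construction buys is the dynamical picture --- guards remain confined to their respective parts forever --- which is the intuition reused in the $K_p \times K_q$ computation of Section \ref{sec:Families}.
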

\begin{proof}
  Let $S$ be a set of vertices of size $k$ containing one element of each of the sets $S_i$. This is a dominating set by the hypothesis that the induced graphs on the sets $S_i$ are complete.
  
  With guard configuration $S$, consider an attack at an arbitrary unoccupied vertex $v$. The vertex $v$ is contained in some $S_i$, and as such is defensible by the (single) guard also contained in the subset $S_i$. It is possibly adjacent to a guard at a vertex $w$, contained in a distinct partition set $S_j$, $j \ne i$. We claim that there is no dominating set adjacent to $S$ containing $w$.

There are at least $k+1$ vertices in $S_j$ by hypothesis. There are, in addition to the guard at $w$, $k-1$ other guards. If the guard at $w$ moves to $v$, the vertex $w$ is still guarded (by the guard that just moved). The $k$ or more other vertices of $S_j$ must be guarded. Since the $k-1$ guards are contained in partition sets other than $S_j$, those $k-1$ guards each defend at most one vertex of $S_j$, by the connectivity hypothesis. We see that the set $S$ is not adjacent to any dominating set which contains no element of $S_j$. Therefore each guard must remain in its respective partition set.

Thus the family generated by $S$ under sequences of legal guard moves is autonomous dominating.

Similar reasoning shows that no collection of $k-1$ vertices forms a dominating set. 
\end{proof}

\subsection{Chromatic Relation}

The eternal domination number of a graph is bounded above by the chromatic number of the graph complement, shown in \cite[Theorem 4]{IODG} and following discussion. The idea is this. Color vertices of the complementary graph, assign a guard to each color, and give instructions to each guard to address attacks only on vertices of its assigned color.

This chromatic bound does not suffice for autonomous domination. The order six graph in Section \ref{sec:basicExamples} has a three-colorable complement but its autonomous domination number is $4$.

\subsection{Connectedness}

Ordinarily it will be of interest only to consider connected graphs. The following lemma is straightforward.

\begin{lemma}
  \label{disjointUnion}
  The autonomous domination number of a disjoint union of graphs is the sum of the autonomous domination numbers of the summands.
\end{lemma}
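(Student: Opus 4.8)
The plan is to prove the two inequalities $\adn(G) \le \sum_i \adn(G_i)$ and $\adn(G) \ge \sum_i \adn(G_i)$ separately, and since both combine to the desired equality it suffices by induction to treat a union of two graphs, so write $G = G_1 \sqcup G_2$. The whole argument rests on one structural observation, which I would establish first: because there is no edge joining $V(G_1)$ to $V(G_2)$, a subset $S \subseteq V(G)$ dominates $G$ if and only if $S \cap V(G_i)$ dominates $G_i$ for $i = 1,2$, and moreover every legal guard move $(v,v')$ in $G$ has both endpoints in the same component. Consequently both the domination property and the adjacency relation on dominating sets decompose along the two components, and this is what lets the two families interact cleanly.

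For the upper bound I would take autonomous dominating families $\mathcal{F}_1$ and $\mathcal{F}_2$ for $G_1$ and $G_2$ containing minimum autonomous dominating sets $A_1$ and $A_2$, and form the product family $\mathcal{F} = \{ S_1 \cup S_2 : S_1 \in \mathcal{F}_1,\ S_2 \in \mathcal{F}_2 \}$. Conditions 1 and 2 of Definition \ref{autonomousFamilyDefinition} are quick: a union of dominating sets dominates $G$, and an attack at $v \in V(G_i)$ is answered by applying condition 2 inside $\mathcal{F}_i$ while leaving the other component fixed. For condition 3, I would take a dominating set $S'$ adjacent to some $S_1 \cup S_2$; by the structural observation the move lies in a single component, say $G_1$, so $S' \cap V(G_1)$ is a dominating set of $G_1$ adjacent to $S_1$ and hence lies in $\mathcal{F}_1$ by closure of $\mathcal{F}_1$, whence $S' \in \mathcal{F}$. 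Then $A_1 \cup A_2 \in \mathcal{F}$ is an autonomous dominating set of size $\adn(G_1) + \adn(G_2)$.

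For the lower bound I would start from an arbitrary autonomous dominating family $\mathcal{F}$ of $G$ and project it, defining $\mathcal{F}_i = \{ S \cap V(G_i) : S \in \mathcal{F} \}$. The goal is to show each $\mathcal{F}_i$ is an autonomous dominating family of $G_i$; then every member of $\mathcal{F}_i$ has size at least $\adn(G_i)$, and since $|S| = |S \cap V(G_1)| + |S \cap V(G_2)|$ for each $S \in \mathcal{F}$, every autonomous dominating set of $G$ (in particular a minimum one) has size at least $\adn(G_1) + \adn(G_2)$. Conditions 1 and 2 for $\mathcal{F}_i$ again follow directly from the structural observation. The delicate step is condition 3: given $T = S \cap V(G_i) \in \mathcal{F}_i$ and a dominating set $T'$ of $G_i$ adjacent to $T$ via a move $(w,w')$, I must produce $T'$ inside $\mathcal{F}_i$. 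The idea is to lift the move to $G$ by setting $S' = (S \setminus \{w\}) \cup \{w'\}$; using the structural observation I would check that $S'$ still dominates $G$ (its restriction to $G_i$ is $T'$ and to the other component is unchanged) and is adjacent to $S$, so closure of $\mathcal{F}$ gives $S' \in \mathcal{F}$ and hence $T' = S' \cap V(G_i) \in \mathcal{F}_i$.

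The hard part, I expect, is exactly this lifting argument for condition 3 in the lower bound: I must be sure that an adjacent dominating set of a single component genuinely extends to an adjacent dominating set of the whole graph. This is precisely where the absence of cross-component edges is indispensable, since it guarantees both that the lifted move is still along an edge of $G$ and that re-attaching the untouched component preserves domination. Once that is in hand the two inequalities yield $\adn(G) = \adn(G_1) + \adn(G_2)$, and induction extends the result to any finite disjoint union.
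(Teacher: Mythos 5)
Your proof is correct. The paper offers no argument for Lemma \ref{disjointUnion} at all --- it is simply declared ``straightforward'' --- so there is nothing to compare against; your write-up supplies exactly the details the paper omits, and it is the natural argument. Both directions are sound: the product family $\{S_1 \cup S_2 : S_i \in \mathcal{F}_i\}$ satisfies all three conditions of Definition \ref{autonomousFamilyDefinition} precisely because every legal guard move has both endpoints in one component, and the projected families $\mathcal{F}_i = \{S \cap V(G_i) : S \in \mathcal{F}\}$ inherit the three conditions for the same reason. The only step that genuinely needs care is the one you flag yourself: closure (condition 3) for the projected family, where an adjacent dominating set $T'$ of a single component must be lifted to an adjacent dominating set of the whole graph before the closure of $\mathcal{F}$ can be invoked. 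Your lifting $S' = (S \setminus \{w\}) \cup \{w'\}$ handles this correctly, since $w' \notin S$ (it lies in $V(G_i)$ but not in $T$) and domination of $S'$ follows componentwise. One tiny point worth making explicit in a final version: each member of $\mathcal{F}_i$ is by definition an autonomous dominating set of $G_i$, hence has size at least $\adn(G_i)$, which is what converts the projection into the lower bound $|S| = |S \cap V(G_1)| + |S \cap V(G_2)| \ge \adn(G_1) + \adn(G_2)$.
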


\section{Elementary Examples}
\label{sec:elementaryExamples}
It is not difficult to check that $\adn(P_2) = 1$ and $\adn(P_3) = 2$. For longer paths, the following result gives the autonomous domination number.

\begin{proposition}
\label{prop:adnPath}
  The autonomous domination number of the path $P_n$ is $n-2$ for $n \ge 4$.
\end{proposition}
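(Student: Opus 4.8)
The plan is to prove the two inequalities $\adn(P_n)\le n-2$ and $\adn(P_n)\ge n-2$ separately, labelling the vertices $v_1,\dots,v_n$ along the path and describing a guard configuration by its set of unoccupied vertices, which I will call \emph{holes}. A configuration of size $n-2$ has exactly two holes, so the two inequalities amount to showing that two holes can be managed autonomously but three cannot. The case $n=4$ is immediate for the lower bound, since $\gamma(P_4)=2$ already forbids an autonomous set of size~$1$.

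For the upper bound I would apply Theorem~\ref{thm:secDomBound} with $k=n-2$, so it suffices to check that every dominating set of size $n-2$ is secure dominating. A size-$(n-2)$ set fails to dominate exactly when its two holes are $\{v_1,v_2\}$ or $\{v_{n-1},v_n\}$, since then an endpoint loses its only possible dominator; every other pair of holes dominates. An attack can occur only at a hole $v_a$, and I would defend it by sliding in the guard adjacent to $v_a$ on the side \emph{away} from the other hole. Such a move endangers only the vertex two steps out on that side, and that vertex is a guard precisely because it is not the other hole. This routine case check establishes secure domination and hence $\adn(P_n)\le n-2$.

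For the lower bound it is equivalent to show that no autonomously dominating family has three or more holes. (Note that Lemma~\ref{lemma:includeIndependent} only yields $\adn(P_n)\ge\lceil n/2\rceil$, which is too weak for $n\ge 6$, so a different idea is needed.) Suppose $\mathcal F$ is such a family. By the third condition $\mathcal F$ contains every dominating set reachable from one of its members by a sequence of legal guard moves, and by the first two conditions each of its members is secure dominating. I would exhibit a \emph{stuck} (non-secure) configuration reachable by legal moves, contradicting the latter. The basic obstruction is the local pattern ``hole, hole, guard, hole, hole'' on five consecutive vertices: its second hole has no guard to its left, while the single guard to its right cannot move in without stranding the fourth vertex, whose two neighbours both become holes. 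When there are exactly three holes this five-vertex pattern does not fit, and I would instead drive the holes to $v_{n-3},v_{n-2},v_n$, where $v_{n-2}$ is stuck for the same reason: its left neighbour is a hole, and moving the guard at $v_{n-1}$ strands the endpoint $v_n$.

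The crux is the reachability claim: from an arbitrary dominating set with $h\ge 3$ holes one can reach, by legal single-guard slides that remain dominating at every step, a configuration containing a stuck pattern. For $h=3$ this is a clean induction --- slide the rightmost hole to $v_n$, then the next to $v_{n-2}$, then the last to $v_{n-3}$, verifying at each step that domination is preserved because the vertex being vacated always retains a dominating neighbour. The main obstacle is carrying this out uniformly for all $h\ge 3$: for large $h$ the dominating configurations are forced to be dense, of the form ``guard, hole, hole, guard, hole, hole,'' so one must find room to seat the five-vertex stuck pattern without destroying domination elsewhere, and the boundary cases where the holes are maximally packed will require the most care. I expect this reconfiguration-reachability step, rather than the identification of the stuck pattern or the upper bound, to be the heart of the argument.
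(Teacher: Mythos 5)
Your upper bound is sound and is essentially the paper's: check that every dominating set of size $n-2$ is secure dominating and invoke Theorem~\ref{thm:secDomBound} (your sliding rule needs a small patch at the endpoints, e.g.\ an attack at $v_1$ when the other hole is $v_3$, where there is no ``side away from the other hole,'' but the fix is routine). The lower bound, however, has a genuine gap, and it sits exactly where you predicted: the claim that from an \emph{arbitrary} member of the family with $h\ge 3$ holes one can reach a stuck configuration by legal moves. You carry this out only for $h=3$ and explicitly leave the general case open, and the difficulty is real: a priori a member of a putative autonomous family can be very hole-dense --- for instance the set of odd-indexed vertices of $P_n$ is a secure dominating set with roughly $n/2$ holes --- so the configurations you must handle look nothing like the three-hole case, and your hole-sliding induction does not extend to them in any uniform way.

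The idea you are missing is that Lemma~\ref{lemma:includeIndependent} should be used not for its cardinality consequence (which, as you correctly note, is too weak) but as a normalization device, and this is precisely how the paper makes the reconfiguration uniform in the number of holes. The lemma guarantees that any autonomous family whose members have size at most $n-3$ contains a set $S$ including the independent dominating set $I$ of even-indexed vertices. From such an $S$ every hole lies at an odd position, and a guard at an odd position $a_j$ can be shifted past an odd hole $a_{j+2}$ by the pair of legal moves $(a_{j+1},a_{j+2})$, $(a_j,a_{j+1})$; domination never fails because the even positions remain occupied throughout. Since $|S|\le n-3$ forces at least three odd holes, these shifts accumulate holes at $a_1,a_3,a_5$, and then the single move $(a_4,a_5)$ produces exactly your stuck pattern (holes at $a_1,a_3,a_4$, guards at $a_2,a_5$: the attack at $a_3$ can be met only by $a_2$, which strands $a_1$), contradicting closure together with secure domination. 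So your obstruction and your closure framework are correct; what is missing is the reduction of an arbitrary starting configuration to a canonical one, and the lemma you discarded is the tool that performs it.
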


\begin{proof}
  
  First, consider $n\ge 5$. Let $\{a_1,a_2,\dots,a_n\}$ be the vertices of $P_n$, with edges $(a_i,a_{i+1})$. The set $I = \{a_2, a_4,\dots,a_{\lfloor \frac{n}{2} \rfloor}\}$ is independent and dominating, so by Lemma \ref{lemma:includeIndependent} it is a subset of an autonomous dominating set. Let $S$ be a dominating set with $|S|\le n-3$ which contains $I$. Let $j$ be an odd integer such that $a_j \in S$ and $a_{j+2} \notin S$ and $j+2\le n$. Then the legal guard moves $(a_{j+1},a_{j+2})$ followed by $(a_j,a_{j+1})$ show that the guards outside $I$ can accumulate among the vertices with higher indices. After a series of such moves, by the cardinality of $S$ we arrive at a dominating set which contains $\{a_2,a_4\}$ and which does not contain $\{a_1,a_3,a_5\}$.  Then $(a_4,a_5)$ is a legal guard move. The resulting set of vertices is no longer secure dominating, since only the guard at $a_2$ defends $a_3$, but $(a_2,a_3)$ is not a legal guard move since it would leave $a_1$ without defense.

In the case $n=4$, there is no dominating set of size $n-3 = 1$.
  
Having established the lower bound, now consider any dominating set $S$ of size $n-2$. Either the omitted vertices are adjacent, or they are not. If they are adjacent, they must be interior to the path (by the fact that $S$ is dominating), in which case $S$ is secure dominating. If they are not adjacent it is immediate that $S$ is secure dominating. By Theorem \ref{thm:secDomBound} and the lower bound already established, we see that $\adn(P_n) = n-2$.
  
\end{proof}

The foolproof eternal domination number of $P_n$ is $n-1$, and the eternal domination number is $\lceil \frac{n}{2} \rceil$ as computed in \cite{IODG}, so that the autonomous domination number is intermediate, but closer to the foolproof for large~$n$.

A similar relation is present in the case of cycles. For small cycles, it is straightforward to check that $\adn(C_3) = 1$, $\adn(C_4) = 2$, and ${\adn(C_5)=3}$. The general result is the following.

\begin{proposition}
  The autonomous domination number of the cycle $C_n$ is $n-3$ for $n \ge 6$.
\end{proposition}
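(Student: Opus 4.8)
The plan is to prove the two inequalities $\adn(C_n) \le n-3$ and $\adn(C_n) \ge n-3$ separately. For the upper bound I would invoke Theorem~\ref{thm:secDomBound}, so that it suffices to check that every dominating set of $C_n$ of size $n-3$ is secure dominating. For the lower bound I would argue by contradiction: if an autonomous family $\mathcal{F}$ had members of size $m \le n-4$, then starting from any such member and applying a sequence of legal guard moves I would reach a dominating set that is \emph{not} secure dominating. By the third (closure) condition of Definition~\ref{autonomousFamilyDefinition} this set would again lie in $\mathcal{F}$, contradicting the fact, noted after that definition, that the first two conditions force every member of an autonomous family to be secure dominating.

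For the upper bound, a dominating set $S$ of size $n-3$ omits exactly three vertices, and $S$ is dominating precisely when these three omitted vertices do not occur as three consecutive vertices of the cycle. I would split into cases according to whether the omitted set consists of three mutually non-adjacent vertices or contains exactly one adjacent pair (the all-consecutive case being excluded as non-dominating). In each case I would verify that an attack at an omitted vertex can be answered by a guard move that does not create three consecutive omitted vertices. The only delicate situation is an adjacent omitted pair together with a third omitted vertex two steps away, where one of the two candidate responses would produce three consecutive omissions; there the other response is valid as soon as $n \ge 6$, and this is exactly the point at which the argument excludes $C_5$. Theorem~\ref{thm:secDomBound} then gives $\adn(C_n) \le n-3$.

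For the lower bound I would encode a dominating set of size $m$ by its cyclic vector of $m$ gaps $g_1,\dots,g_m$, where $g_i$ counts the empty vertices between two consecutive guards; domination is equivalent to every $g_i \le 2$, and $\sum_i g_i = n-m$. In this language a legal guard move transfers one empty vertex from a gap to an adjacent gap, subject to both gaps remaining at most $2$, so that every intermediate configuration is automatically dominating. The key local observation is that a dominating set fails to be secure whenever some guard is flanked by a gap of exactly $2$ on each side, that is, whenever two cyclically adjacent gaps both equal $2$: attacking an empty vertex adjacent to that guard forces the unique adjacent guard to move, which un-dominates an empty vertex on the far side. If $m \le n-4$ then $\sum_i g_i \ge 4$, so beginning from the gap vector of any member of $\mathcal{F}$ I would route empties along the cycle to assemble two adjacent gaps each equal to $2$, reaching such a non-secure configuration and hence the desired contradiction. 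This forces $m \ge n-3$.

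I expect the main obstacle to be this routing step: showing that, for every admissible gap vector with total at least $4$, a schedule of single-step transfers that never violates the cap of $2$ can collect two adjacent gaps of size $2$, uniformly in $m$ and in the parity of $n$, with care taken around the cyclic wrap. The counting is reassuring, since two adjacent gaps of $2$ require four empties, which is precisely why a set of size $n-3$ (having only three empties) cannot contain the trap and is forced to be secure. Finally I would dispose of the endpoint $n=6$ directly, where $m \le 2$ forces both gaps to equal $2$ and the set is already non-secure, and I would remark that the elementary bounds are too weak to settle this, since the eternal lower bound is only $\lceil n/3 \rceil$ and the foolproof upper bound only $n-2$.
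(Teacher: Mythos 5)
Your skeleton is the same as the paper's: the upper bound via Theorem~\ref{thm:secDomBound} by checking that every dominating set of size $n-3$ is secure dominating (your case analysis on the three omitted vertices, including the remark about why $C_5$ escapes, matches the paper's terser ``no room for an isolated guard''), and the lower bound by driving a hypothetical small member of $\mathcal{F}$ through legal moves to a non-secure dominating set and invoking the closure condition. But your lower bound takes a genuinely different route. The paper first normalizes the starting configuration using Lemma~\ref{lemma:includeIndependent}: some member of $\mathcal{F}$ contains the independent set of odd-indexed vertices, and from that canonical position it exhibits explicit moves ending with a guard at $a_5$ flanked by the empty pairs $\{a_3,a_4\}$ and $\{a_6,a_7\}$ --- precisely your ``two adjacent gaps of size $2$'' failure criterion --- with $n=6$ and $n=7$ handled by separate ad hoc arguments. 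Your gap-vector formalism instead starts from an \emph{arbitrary} member of $\mathcal{F}$ and is uniform in $n \ge 6$ (for $n=6$ the two gaps are forced to be $(2,2)$ already; for $n=7$ the routing is a one-step computation), which is cleaner and avoids the special cases.

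The price is that everything now rests on the routing lemma, which you have correctly identified but not proved; as written the proof is incomplete at exactly its crucial step. The lemma is true, and here is how to discharge it. First note that transfers are reversible (if a transfer from $g_i$ to $g_{i+1}$ is legal, so is its inverse), so you may argue freely. If two gaps equal $2$, pick a pair at minimal cyclic distance; all gaps strictly between them are at most $1$, so you can shift one of the $2$'s toward the other, one position at a time (one transfer if the next gap is $1$, two transfers if it is $0$), until they are adjacent. If exactly one gap equals $2$, the total $\ge 4$ forces at least two gaps equal to $1$; route the nearest $1$ until it sits beside the $2$, then route one further unit into that gap along the arc avoiding the $2$ (every intermediate gap is at most $1$, so every transfer is legal). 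If no gap equals $2$, there are at least four $1$'s; merge the two nearest into a $2$ and reduce to the previous case. Alternatively, you could sidestep the routing entirely by doing what the paper does: invoke Lemma~\ref{lemma:includeIndependent} to normalize the starting member before moving guards. One last small correction: the eternal domination number of $C_n$ is $\lceil n/2 \rceil$ (as stated in the paper), not $\lceil n/3 \rceil$, which is the ordinary domination number; this does not affect your argument, since both are far below $n-3$.
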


\begin{proof}
  Let $n$ be at least $8$, and let $\{a_1,\dots,a_n\}$ be the vertices of $C_n$, with edges $(a_i,a_{i+1})$ and $(a_1,a_n)$. Let $I$ be the set of all odd-indexed vertices with index strictly less than $n$. Let $S$ be any dominating set of size $|S| \le n-4$ which contains $I$. Reasoning as in the proof of Proposition \ref{prop:adnPath}, $S$ is connected through a series of legal guard moves to a dominating set which contains the vertices of $I$ as well as the vertex $a_n$ and all vertices with sufficiently large even index. In particular, by cardinality, the vertices $\{a_2,a_4,a_6, a_8\}$ are unguarded. The series of legal guard moves $(a_3,a_2)$ and $(a_7,a_8)$ yields a dominating set which is not secure dominating, since the guard at $a_5$ must defend both $a_4$ and $a_6$ but cannot do so.

In the case of $n=6$, there is no secure dominating set of size $2$. In the case of $n=7$, the independent set $\{a_1,a_4,a_6\}$ is dominating and so would necessarily be contained in an autonomous family whose elements have size $3$. But $(a_6,a_7)$ is a legal guard move, yielding $\{a_1,a_4,a_7\}$ which is not secure dominating, since only $a_4$ defends $a_3$ and $a_5$.

Every dominating set of size $n-3$ is secure dominating (there is no room to end up with an `isolated guard'), so by Theorem \ref{thm:secDomBound} we conclude that $\adn(C_n) = n-3$ for $n\ge 6$.
\end{proof}

The comparable results are that the eternal domination number of $C_n$ is $\lceil \frac{n}{2} \rceil$ and the foolproof eternal domination number is $n-2$, again given in \cite{IODG}.

\section{Autonomous Domination Numbers for Certain Families of Graphs}
\label{sec:Families}

Let $p \le q$ be natural numbers. In \cite[Proposition 2]{IODG} it is shown that the eternal domination number of $K_p \times K_q$ is $p$, while the foolproof eternal domination number is $pq-(p+q)+2$, so that the two differ widely in general. For this family of graphs the autonomous domination number happens to coincide with the eternal domination number.

\begin{proposition}
  Suppose $p$ and $q$ are natural numbers with $p \le q$. Then $\adn(K_p \times K_q) = p$.
\end{proposition}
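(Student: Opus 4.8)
The plan is to view $K_p \times K_q$ concretely as a rook's graph: its vertices are the pairs $(i,j)$ with $1 \le i \le p$ and $1 \le j \le q$, and two distinct vertices are adjacent exactly when they agree in one of their two coordinates. The lower bound is immediate: an autonomous dominating set is in particular an eternal one, so Lemma~\ref{boundedBelowByEternal} together with the value $\edn(K_p \times K_q) = p$ from \cite[Proposition 2]{IODG} gives $\adn(K_p \times K_q) \ge p$. Everything then reduces to the upper bound $\adn \le p$.

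For this I would use the partition count of Proposition~\ref{partitionBound}, partitioning the vertices into the $p$ rows $S_i = \{(i,1),\dots,(i,q)\}$, so that $k = p$. Each $S_i$ induces a complete graph, because vertices sharing a first coordinate are mutually adjacent. Moreover a vertex $(i,j)$ has, outside its own row, exactly the neighbors $(i',j)$ with $i' \ne i$, that is, precisely one neighbor in each other row; so the requirement that a vertex send at most one edge into any other part of the partition is met. When $q > p$ we have $|S_i| = q > p = k$, and Proposition~\ref{partitionBound} applies directly to give $\adn(K_p \times K_q) = p$.

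The delicate case, and the step I expect to be the real obstacle, is $q = p$: then $|S_i| = q = p = k$, the strict inequality $|S_i| > k$ fails, and the counting in Proposition~\ref{partitionBound} degenerates to an equality that no longer rules out a guard abandoning its row. Here I would appeal instead to Theorem~\ref{thm:secDomBound}, showing that every dominating set of size $p$ is secure dominating. The governing observation is that a placement of rooks dominates the board if and only if it meets every row or meets every column; hence a dominating set of size $p$ has either exactly one rook per row or exactly one rook per column. In the former case an attack at $(i,j)$ is answered by sliding the rook of row $i$ along that row to $(i,j)$, a legal move that preserves the one-rook-per-row pattern and hence domination, and the latter case is symmetric. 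Theorem~\ref{thm:secDomBound} then yields $\adn \le p$, completing the equality. (The same secure-domination argument in fact works uniformly for all $p \le q$, since when $q > p$ a size-$p$ dominating set is forced to meet every row; but the partition count gives the more transparent derivation in the generic range $q > p$, which is presumably why Proposition~\ref{partitionBound} was set up.)
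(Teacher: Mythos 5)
Your proof is correct, and in the generic case $q > p$ it coincides with the paper's: the same lower bound via Lemma~\ref{boundedBelowByEternal} and \cite[Proposition 2]{IODG}, and the same application of Proposition~\ref{partitionBound} to the partition into rows. Where you genuinely diverge is the square case $p = q$. The paper handles it dynamically: it places $p$ guards on distinct rows and analyzes which legal guard moves are possible, arguing that a move vacating a row forces the guards to occupy distinct columns, so the family generated by legal moves never leaves the collection of ``one guard per row or one guard per column'' configurations. You instead argue statically: a set of rooks dominates if and only if it meets every row or meets every column, hence a dominating set of size $p$ has exactly one rook per row or exactly one per column, and each such set is secure dominating (slide the unique rook of the attacked row, respectively column, onto the attack), so Theorem~\ref{thm:secDomBound} gives $\adn \le p$. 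The two arguments rest on the same structural fact, and indeed invoke the same family---the family in the proof of Theorem~\ref{thm:secDomBound}, namely all dominating sets of size $p$, is exactly the collection of configurations the paper tracks---but your version is tighter: the characterization of dominating sets of size $p$ plus a one-line verification of secure domination replaces the paper's rather terse move-by-move analysis. Your closing observation is also correct and worth keeping: when $q > p$ a dominating set of size $p$ cannot meet every column, so it must meet every row, and the same secure-domination argument then covers all $p \le q$ uniformly; this would eliminate the case split and the reliance on Proposition~\ref{partitionBound} altogether.
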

\begin{proof}
  First consider the case that $p < q$. Partition the graph $K_p \times K_q$ into the $p$ sets of vertices of the copies of $K_q$, each of which has size $q$. Then apply Proposition \ref{partitionBound} to infer that $\adn(K_p \times K_q) = p$.

  It remains to treat the case $p=q$. By \cite[Proposition 2]{IODG} the eternal domination number is $p$, so the elementary bound of Lemma \ref{boundedBelowByEternal} implies that $\adn$ is at least $p$. We now show that $p$ guards suffice.
  
  Enumerate the vertices of each $K_p$ with non-negative integers $\{0,\dots,p-1\}$. The vertices of the product $K_p \times K_p$ are then ordered pairs $(i,j)$ of such integers.

  Distribute $p$ guards to $p$ vertices with distinct first coordinates; the second coordinate of each location is arbitrary. This is a dominating set by the completeness of the factor $K_p$.
  
  Consider a legal guard move whose result is that there is an integer $i$ such that there is no guard at $(i, j)$ for all $j$. In other words, the collection of vertices $(i, \bullet)$ in $K_p \times K_p$ has been vacated. For this to be the case, there must have been guards at $(j_k, k)$ for all $k$ (the first coordinates need not be distinct) so that every complete subgraph of the form $(i, \bullet)$ remains dominated.

  Therefore, at every time, legal guard movements will be such that either every guard has a distinct first coordinate, or every guard has a distinct second coordinate, and all such subsets of vertices are dominating sets.

Since the eternal domination number $p$ is a lower bound for the autonomous domination number, we conclude that $\adn(K_p\times K_p) =p$.
\end{proof}

Ladder graphs are also an instance in which eternal and foolproof eternal domination numbers diverge widely. In this case the autonomous domination number differs negligibly from the foolproof one. By \cite[Theorem 8]{IODG} the eternal domination number of $P_2 \times P_n$ is $n$, and the foolproof eternal domination number is $2n-2$.

\begin{proposition}
The ladder graph $P_2 \times P_n$ has autonomous domination number ${\adn(P_2 \times P_n) = 2n-3}$.
\end{proposition}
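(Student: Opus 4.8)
The plan is to establish the two inequalities $\adn(P_2\times P_n)\le 2n-3$ and $\adn(P_2\times P_n)\ge 2n-3$ separately. I will write the vertices as pairs $(i,j)$ with $i\in\{1,2\}$ (the two rows) and $j\in\{1,\dots,n\}$ (the $n$ rungs), so the edges are the horizontal pairs $((i,j),(i,j+1))$ and the vertical rungs $((1,j),(2,j))$. A vertex not in a given dominating set I will call a \emph{hole}, so that a dominating set of size $2n-k$ has exactly $k$ holes. I will assume $n$ is large enough that the formula is the interesting case (the small cases, where the ladder is $C_4$ or close to it, are checked directly), and I will use the eternal bound $\adn\ge\edn=n$ from Lemma~\ref{boundedBelowByEternal}, which ensures any autonomous family under discussion has members of size at least $n$.

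For the upper bound I will invoke Theorem~\ref{thm:secDomBound} with $k=2n-3$, reducing the claim to showing that every dominating set with exactly three holes is secure dominating. The key notion is that of a \emph{trap}: an unguarded vertex $v$ such that, for every guarded neighbour $w$ of $v$, the move $(w,v)$ leaves undominated some vertex $u\ne w$ whose only dominator is $w$ and which is not a neighbour of $v$. A dominating set is secure exactly when it has no trap. I will argue that in the ladder a trap requires at least four holes. The dependent vertex $u$ must have all of its neighbours other than $w$ unguarded; since every vertex away from the four corners has degree three, the vertex $v$ (with the holes among its own neighbours) together with $u$ and $u$'s forced extra hole already accounts for four distinct holes, the minimal instance being the corner cluster $\{(1,1),(1,2),(2,2),(2,3)\}$. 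Hence three holes can never form a trap, every size-$(2n-3)$ dominating set is secure, and Theorem~\ref{thm:secDomBound} gives $\adn\le 2n-3$.

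For the lower bound I will rule out autonomous families of size $m\le 2n-4$ by means of Corollary~\ref{cor:supersetOfIndependent}. I fix an independent dominating set $I$ containing $(2,1)$ and $(1,3)$, avoiding the corner cluster $\{(1,1),(1,2),(2,2),(2,3)\}$, and with connected complement; a zigzag such as $\{(2,1),(1,3),(2,5),\dots\}$ (its last guard adjusted for the parity of $n$ so the final column is dominated) works, and $|I|\approx n/2<n\le m$. Supposing a family $\mathcal F$ with members of size $m\le 2n-4$ exists, I form $T\supseteq I$ with $|T|=m$ by adding $m-|I|$ further guards anywhere outside the corner cluster, which is possible precisely because $m\le 2n-4$ leaves enough room. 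By Corollary~\ref{cor:supersetOfIndependent}, $T\in\mathcal F$. But $T$ has holes at all four corner-cluster vertices, so the attack at $(2,2)$ sees $(2,1)$ as its only guarded neighbour, and the move $(2,1)\to(2,2)$ undominates the corner $(1,1)$; thus $T$ is a trap and is not secure dominating. Since the members of an autonomous family are secure dominating (conditions~1 and~2 of Definition~\ref{autonomousFamilyDefinition}), this is a contradiction, so $m\ge 2n-3$.

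I expect the main obstacle to be the trap analysis underlying the upper bound: one must check carefully that no placement of only three holes, whether at a corner, along an edge, or in the interior, can keep every potential defender of an attacked vertex from moving safely. This amounts to examining the few hole patterns near a corner and confirming that the degree-three vertices away from the corners make any interior trap impossible with so few holes. By contrast, the lower bound is nearly immediate once the single non-secure witness $T$ and the independent set $I$ are fixed, since Corollary~\ref{cor:supersetOfIndependent} does the work of forcing $T$ into the hypothetical family.
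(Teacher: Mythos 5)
Your upper bound is sound and matches the paper: both arguments reduce, via Theorem~\ref{thm:secDomBound}, to showing every dominating set with three holes is secure, and your ``trap'' formalization (a trap needs the attacked vertex, a distinct dependent vertex not adjacent to it, and that dependent's forced extra holes, hence at least four holes) is a correct and in fact cleaner way to organize the case analysis than the paper's sketch. The problem is in your lower bound.

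Your lower bound leans entirely on Corollary~\ref{cor:supersetOfIndependent}, whose hypothesis --- that the subgraph induced by the complement of $I$ is connected --- is unsatisfiable in the ladder when $n$ is even, so your argument fails for half of all cases and no ``parity adjustment of the last guard'' can rescue it. To see this, note first that an independent set contains at most one vertex per rung, so every column contributes at least one vertex to the complement. If $I$ contains vertices in two consecutive columns $j$, $j+1$, independence forces them into opposite rows; then each of the only two edges joining column $j$ to column $j+1$ has an endpoint in $I$, so the complement splits into a nonempty left part and a nonempty right part with no edge between them. If instead $I$ never occupies consecutive columns, domination forces its columns to be exactly $1,3,5,\dots$ ending at column $n$ (a column at distance $\ge 2$ from all $I$-columns is undominated, and an end column or a skipped column flanked on only one side has one of its two vertices undominated), which forces $n$ odd. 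So for even $n$ every independent dominating set disconnects its complement, the Corollary is inapplicable, and your witness $T$ cannot be placed in the family this way. The paper avoids this trap by using Lemma~\ref{lemma:includeIndependent} (which has no connectivity hypothesis) to obtain one family member containing the dense zigzag $\{a_1,b_2,a_3,b_4,\dots\}$, and then exhibiting the guard shuffles explicitly as legal moves --- pushing excess guards toward higher indices and then playing $(b_2,b_3)$, $(a_3,a_4)$ to reach the same four-hole corner configuration you identify. Your proof can be repaired by substituting that explicit-moves argument (for all $n$, or at least for even $n$), but as written it establishes the lower bound only for odd $n$.
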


\begin{proof}
  Let the vertices of $P_2\times P_n$ be $\{a_1,\dots,a_n,b_1,\dots,b_n\}$ with edges $(a_i,a_{i+1})$, $(b_i,b_{i+1})$, and $(a_i,b_i)$. Let $S$ be a dominating set of size $n \le |S| \le 2n-4$ which contains the independent dominating set $\{a_1,b_2,a_3,b_4,\dots\}$. Suppose that $S$ contains $a_{2k}$ and does not contain $a_{2k+2}$. The pairs $(a_{2k+1},a_{2k+2})$ and $(a_{2k},a_{2k+1})$ are legal guard moves whether or not $b_{2k+1}$ is occupied. Thus excess guards in the $a$-subgraph can be moved to vertices of higher index. The same can be done with guards among the $b$ subgraph vertices. After many such moves, we obtain a dominating set $S'$ connected to $S$ through a series of legal guard moves does not contain $\{a_2,a_4,b_1,b_3\}$. Then $(b_2,b_3)$ and $(a_3,a_4)$ are legal guard moves, but the resulting dominating set is not secure dominating since the guard at $a_1$ is isolated and cannot defend both $b_1$ and $a_2$.

  To show that $2n-3$ autonomous guards suffice, consider any dominating set of size $2n-3$. All vertices but those on the ends of the ladder have degree 3, which means that an adjacent vertex is necessarily occupied simply by cardinality. Since the set is presumed dominating, either at least one of $\{a_1,b_1\}$ is included or both $a_2$ and $b_2$ are included. In either case the set is secure dominating. The same reasoning applies for the other end of the ladder.
\end{proof}

\section{Counterexamples}
\label{sec:Counterexamples}

When people act with limited information, circumstances that are in principle advantageous can nonetheless yield detrimental results. The following two examples show that autonomous domination reflects this, in failure that arises through an excess either of choices or of strength. The addition of an edge, which offers new ways for guards to defend locations, can lead them away from the most strategically suitable positions. Additional guards, which in theory make a defensive arrangement stronger, can allow for poor positioning due to temporary security arising from the abundance of defenders.

The examples below are evidence of the utility of the concept of autonomous domination. The parameter does not decrease monotonically with edge addition, and it is not a super-hereditary. These suggest that autonomous domination is a good abstraction that can account for difficulties that arise when agents act tactically with restricted information.

\subsection{Edge addition}

The autonomous domination number can decrease when an edge is added. A simple example is completing a path on three vertices to the complete graph on these three vertices, in which case the autonomous domination number decreases from 2 to 1.

Adding an edge can increase the autonomous domination number, however. A new edge means that guards can be drawn away from one portion of the graph and into another, and this could leave the source portion vulnerable.

\begin{lemma}
  The graph on the left in Figure~\ref{fig:House} has autonomous domination number $2$.
\end{lemma}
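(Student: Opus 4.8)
The plan is to prove the two bounds $\adn \ge 2$ and $\adn \le 2$ separately. For the lower bound I would first check that the domination number is $2$, i.e.\ that no single vertex of the left graph is adjacent to all the others, so there is no dominating set of size $1$. Since every autonomous dominating set is in particular a dominating set (condition~(1) of Definition~\ref{autonomousFamilyDefinition}), this already forces $\adn \ge 2$. Equivalently one may invoke Lemma~\ref{boundedBelowByEternal}, as the eternal domination number is at least the domination number, which is $2$ here.

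For the upper bound the natural instrument is Theorem~\ref{thm:secDomBound}: taking $k = 2$, it is enough to show that \emph{every} dominating set of size $2$ is secure dominating, for then the family of all two-element dominating sets is autonomously dominating and $\adn \le 2$. Concretely I would enumerate the finitely many two-element dominating sets and, for each such set $S$ and each unoccupied vertex $v \notin S$, exhibit a guard adjacent to $v$ whose move onto $v$ again leaves a dominating set. Granting this, Theorem~\ref{thm:secDomBound} yields $\adn \le 2$, and together with the lower bound we conclude $\adn = 2$.

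The crux is precisely this secure-domination check, and it is where the geometry of the example lives. The companion graph on the right is obtained from this one by adding a single edge, and that edge creates a two-element dominating set that is \emph{not} secure dominating --- a pair of guards that cannot answer an attack at some vertex without abandoning a far corner of the graph --- which is exactly what pushes its autonomous domination number above $2$. So the substance of the present lemma is to confirm that the left graph harbors no such trapped pair: every two-element dominating set can respond to an attack at every unoccupied vertex. Should the uniform check fail for some $2$-set, the fallback would be to build an autonomous family by hand --- taking the connected component, under the legal-guard-move relation, of a secure dominating $2$-set and verifying conditions~(2) and~(3) of Definition~\ref{autonomousFamilyDefinition} throughout --- but the direct appeal to Theorem~\ref{thm:secDomBound} is cleaner when, as I expect to be the case here, all size-$2$ dominating sets are secure.
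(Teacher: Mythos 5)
Your argument is correct, but it takes a different route from the paper. The paper disposes of this lemma in one line: the left graph of Figure~\ref{fig:House} splits into two induced $K_3$ subgraphs (the lower and upper levels), each of size $3>2$, with at most one cross edge per vertex, so Proposition~\ref{partitionBound} yields both bounds simultaneously. You instead prove the lower bound from $\gamma \ge 2$ (no vertex has degree $5$, so no single vertex dominates) and the upper bound from Theorem~\ref{thm:secDomBound} via a finite check of all $2$-element dominating sets. That check, which you defer with a hedge (``as I expect to be the case here''), does in fact succeed and is short: since two vertices in the same triangle leave a vertex of the other triangle uncovered, the $2$-element dominating sets are exactly the nine pairs with one vertex in each triangle, and any such pair is secure because an attacked vertex lies in the same triangle as one of the guards, and moving that guard within its triangle preserves the one-guard-per-triangle configuration, which is again dominating. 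So your proof is complete once this enumeration is written out. What the two approaches buy: the paper's appeal to Proposition~\ref{partitionBound} is shorter here and reuses machinery already developed for products of complete graphs, while your argument is self-contained and elementary, and it isolates exactly the feature destroyed by the added diagonal in the right graph --- there the pair of upper-left and upper-central vertices becomes dominating but not secure, which is precisely the ``trapped pair'' you anticipate and is how the paper proves the companion lemma.
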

\begin{proof}
The vertices of the graph separate into two induced $K_3$ subgraphs, the lower and the upper levels, and Proposition \ref{partitionBound} applies. \end{proof}
\begin{figure}
  \caption{Edge addition can increase the autonomous domination number}
  \label{fig:House}
  \begin{framed}
  \includegraphics[scale=.9,trim=-20 0 -10 0, clip]{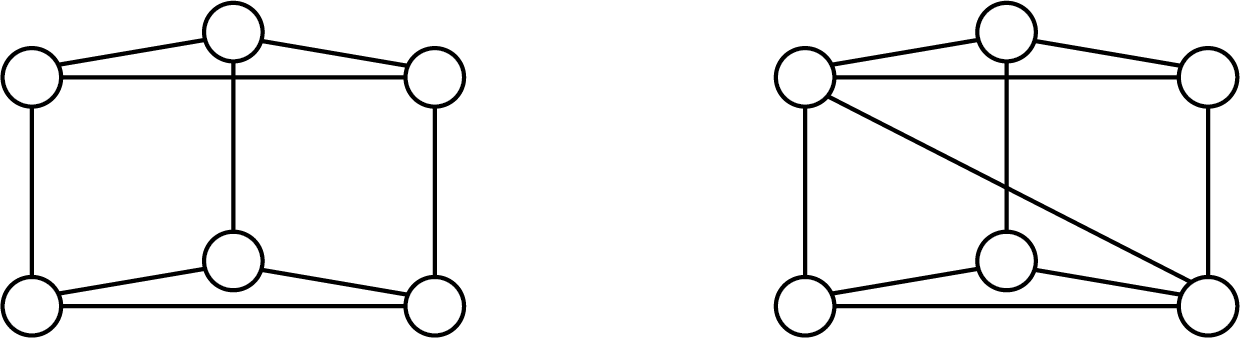}
\end{framed}
\end{figure}

The additional diagonal edge added to obtain the graph on the right causes the autonomous domination number to increase.

\begin{lemma}
  The autonomous domination number of the graph on the right in Figure~\ref{fig:House} is $3$.  
\end{lemma}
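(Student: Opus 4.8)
The plan is to prove the two inequalities $\adn \le 3$ and $\adn \ge 3$ separately, since the elementary bounds are not tight here: the eternal domination number of this graph is only $2$ (a manager would simply keep one guard in each triangle and never lure one across), so Lemma~\ref{boundedBelowByEternal} is useless for the lower bound, and the whole content lies in the autonomous-specific argument. For the upper bound I would appeal to Theorem~\ref{thm:secDomBound}: since the graph has only six vertices, it is a finite check that every dominating set of size $3$ is secure dominating. The only thing to watch for is an ``isolated'' guard forced to defend two vertices at once, but with three guards on six vertices every attacked vertex still retains a neighbour who can step in without uncovering anything, so no such configuration occurs and Theorem~\ref{thm:secDomBound} gives $\adn \le 3$.

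The substance is the lower bound, namely that no autonomously dominating family of two-element sets exists. I would argue by contradiction: suppose $\mathcal{F}$ is such a family. The two triangles still admit an independent dominating set of size two, consisting of one vertex from each triangle (for instance the apex of the upper triangle, which meets no connecting edge, together with a suitable vertex of the lower triangle); one checks directly that this pair dominates. By Lemma~\ref{lemma:includeIndependent} this pair lies in some member of $\mathcal{F}$, and since every member has size two, the pair \emph{is} a member of $\mathcal{F}$.

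From this configuration I would then produce, through a short sequence of legal guard moves, a set that is not secure dominating, contradicting the fact that conditions~1 and~2 of Definition~\ref{autonomousFamilyDefinition} force every member of $\mathcal{F}$ to be secure dominating. This is exactly where the diagonal edge does its work. In the left-hand graph the connecting edges form a matching, so once both guards leave the lower triangle some lower vertex is necessarily uncovered, no cross-move is ever legal, and Proposition~\ref{partitionBound} pins the value at $2$. The added diagonal gives a lower vertex a second route into the upper triangle, so if one first slides the upper guard one step along the upper triangle to \emph{pre-cover} the vertex it is about to abandon, the lower guard may then legally cross the diagonal. This lands both guards in the upper triangle on a still-dominating pair — yet that pair is not secure, since an attack on the remaining (apex) upper vertex can only be answered by moves that uncover a lower vertex, so no legal response exists. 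By closure (condition~3) this insecure pair is forced into $\mathcal{F}$, giving the contradiction and hence $\adn \ge 3$.

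The main obstacle is getting the \emph{order} of the luring moves right: the cross-edge move is illegal until the upper guard has repositioned, so one must verify that each intermediate set is genuinely dominating and therefore that each move is a legal guard move. This sequencing is precisely what separates the right-hand graph from the left-hand one. A secondary point to nail down is that the argument cannot be evaded by a cleverly chosen candidate family: this is automatic, because the insecure pair is reached from the independent dominating set that Lemma~\ref{lemma:includeIndependent} forces into \emph{every} such family. Combining the two bounds yields $\adn = 3$.
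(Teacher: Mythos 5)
Your proposal is correct and follows essentially the same route as the paper: the heart of the matter, the lower bound, is argued exactly as in the paper's proof, by using Lemma~\ref{lemma:includeIndependent} to force the independent dominating pair into any candidate family of two-element sets, then invoking the closure condition to reach an adjacent dominating set with both guards in the upper triangle, which is not secure dominating since the attack on the remaining upper vertex cannot be answered. The only differences are cosmetic: the paper reaches the insecure pair in a single guard move (the lower guard steps straight up, the diagonal's role being merely to keep the third lower vertex dominated, so no preliminary repositioning of the upper guard is needed), and the paper leaves the upper bound $\adn \le 3$ implicit, which your finite check via Theorem~\ref{thm:secDomBound} (or, more quickly, Lemma~\ref{boundedAboveByFoolproof} with $n-\delta = 6-3 = 3$) properly supplies.
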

\begin{proof}
The autonomous domination number must be at least $2$, since the graph has an independent set of size $2$. Consider the independent dominating set consisting of the upper left and lower central vertices. This is adjacent to the dominating set consisting of the upper left and upper central vertices. But this is not secure dominating since an attack on the upper right vertex cannot be safely covered.
\end{proof}

 Another simple example is adding an edge to connect the disjoint union of $C_5$ and $K_3$. The graph $C_5 \coprod K_3$ has autonomous domination number 5, by the additivity statement Lemma \ref{disjointUnion}. When a bridge is built between them, a guard can be `siphoned off' of the $C_5$ subgraph in a way that leaves it exposed to attack. The autonomous domination number of the new graph is 6, rather than 5.

\subsection{Failure through excess of guards}

In can be that $n$ guards suffice for autonomous domination but $n+1$ guards do not. Thus autonomous domination is not a super-hereditary property.

\begin{definition}
  \label{def:houseWithConnection}
  Define a graph with the vertices $$\{a_1,a_2,a_3,a_4,b_1,b_2,b_3,b_4,b_5\}$$ and the following edges.
  \begin{itemize}
  \item Edges such that the set $\{a_i\}_{i=1}^4$ induces a complete subgraph.
  \item Edges such that the set $\{b_i\}_{i=1}^5$ induces a complete subgraph.
  \item $(a_i,b_i)$ for $1 \le i \le 4$.
  \item $(b_1,a_2)$
  \item There are no other edges.
  \end{itemize}
\end{definition}

The graph is depicted in Figure~\ref{fig:HouseWithConnection}. The vertices $a_i$ are below and the vertices $b_i$ are above. The topmost vertex is $b_5$, not adjacent to any of the~$a_i$.

\begin{figure}
  \caption{The graph of Definition \ref{def:houseWithConnection}}
  \label{fig:HouseWithConnection}
  \begin{framed}
    \includegraphics[trim=-70 0 -70 0, clip]{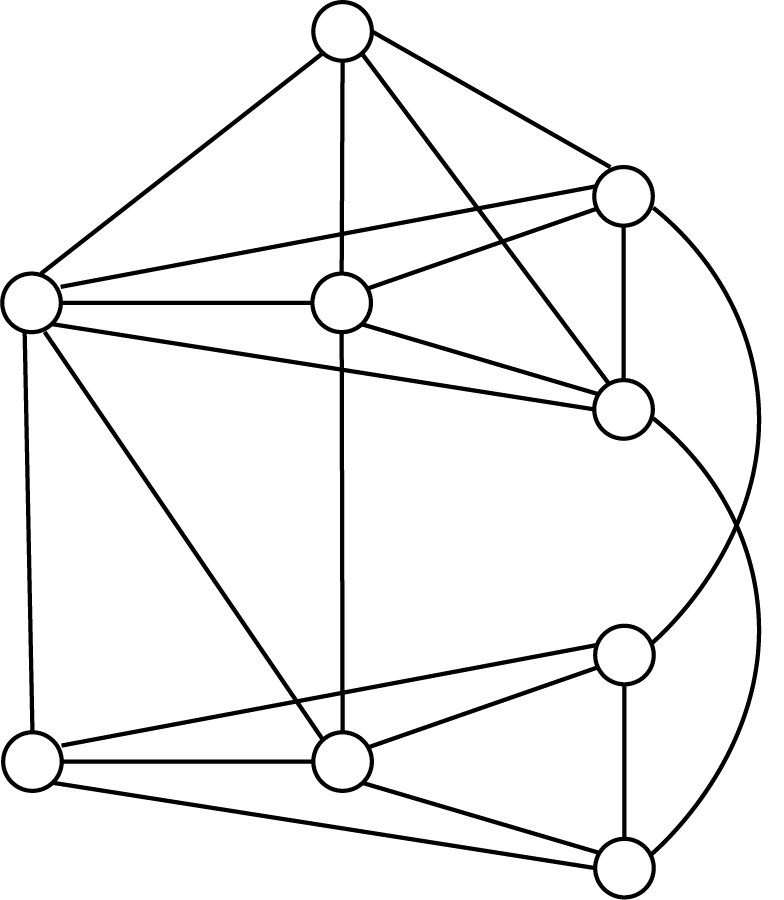}
  \end{framed}
\end{figure}

The eternal domination number of this graph is $2$. This is also the autonomous domination number. Yet there is no autonomous set with three guards. The reason is that an arbitrary dominating set is connected through a series of legal guard moves with $\{b_1,a_3,a_4\}$, which is connected by a pair of legal guard moves with $\{b_1,b_3,b_4\}$. This last set is not secure dominating, since it is not adjacent to a dominating set containing $b_5$. Informally, with three guards it is possible that they all move `upstairs,' and this is a bad decision.

Because autonomous domination is not super-hereditary, it does not suffice to show that a certain number of guards fail in order to find a lower bound on the autonomous domination number. Instead, every possibility greater than or equal to the eternal domination number must be considered.

\section{Realizability of Parameter Values}

It is straightforward to observe that the eternal domination number is bounded below by the domination number, and similarly that the autonomous domination number is bounded below by the eternal domination number (Lemma \ref{boundedBelowByEternal}). A question following from these bounds is whether every possible collection of three natural numbers satisfying the inequalities constitute the domination number, eternal domination number, and autonomous domination number of a graph. This is so (excluding a trivial exception), as will be shown below. The proof will require reference to a number of families of graphs constructed for the purpose. These will be presented first, and their various parameters computed. The theorem then collects these results.

\begin{definition}
  Given a natural number $n\ge 2$ define a graph $\Acal_n$ with vertices
  $$\{a_1, a_2, \dots, a_{2n+1},b_1, b_2, \dots, b_{2n}, c\}$$ and  the following edges.
  \begin{itemize}
  \item  The vertices $a_i$ induce a complete subgraph.
  \item  The vertices $b_i$ induce a complete subgraph.
  \item $(a_i,b_i)$ for all $1 \le i \le 2n$
  \item $(a_i,b_j)$ for all $1 \le i \le n$ and $1 \le j \le 2n - i$ (observe that some of these were included in the previous item)
  \item $(c,a_i)$ fo all $1 \le i \le 2n+1$
  \item $(c,b_j)$ fo all $1 \le j \le 2n$   
  \end{itemize}
\end{definition}

The graph $\Acal_2$ is sketched in Figure \ref{fig:ConeOnHouse}. Some simplification has been made there to avoid clutter. The ovals denote induced complete subgraphs, and the vertex $c$, adjacent to all other vertices, is shown with just a few half edges.

\begin{figure}
  \centering
  \caption{The graph $\Acal_2$}
  \label{fig:ConeOnHouse}
  \begin{framed}
 \includegraphics[trim=-40 0 -40 0, clip]{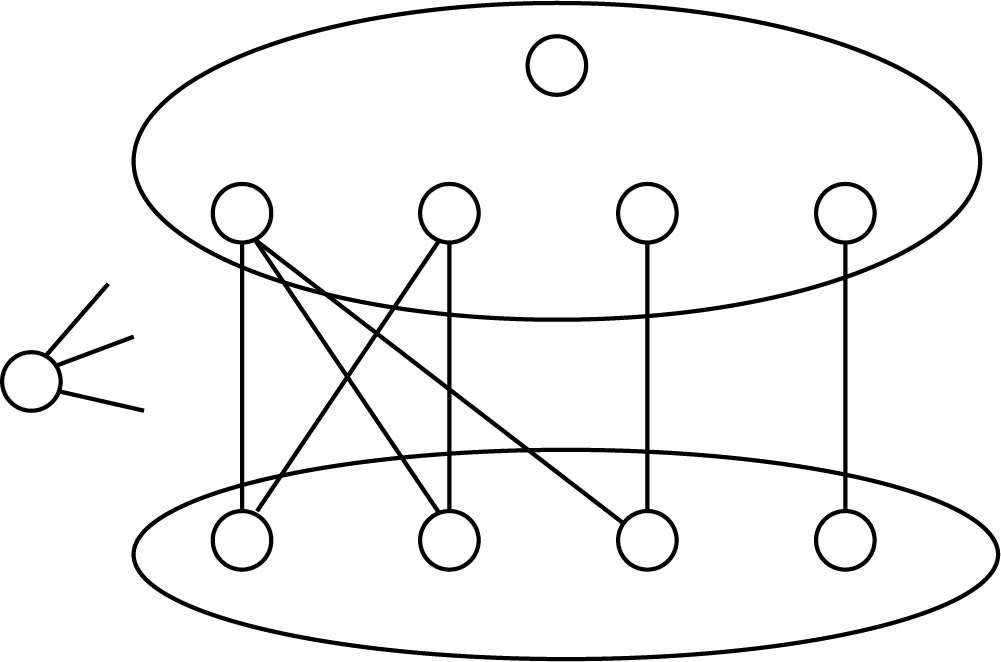}
\end{framed}

\end{figure}

\begin{proposition}
  The parameters of $\Acal_n$ are these: ${\gamma(\Acal_n)=1}$, ${\edn(\Acal_n)=2}$, and  ${\adn(\Acal_n)=n+1}$.
\end{proposition}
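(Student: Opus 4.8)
The plan is to dispose of the two smaller parameters quickly and then concentrate on \adn.

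For $\gamma$ and \edn\ the decisive feature is that $c$ is adjacent to every other vertex. Hence $\{c\}$ is a dominating set and $\gamma(\Acal_n)=1$. For the eternal number I would first rule out a single guard: the only dominating singleton is $\{c\}$, since no $a_i$ is adjacent to all $b_j$ (for instance $a_{2n+1}$ has no $b$-neighbour at all, and $a_1$ misses $b_{2n}$) and no $b_j$ is adjacent to $a_{2n+1}$; so from $\{c\}$ an attack at any $v\neq c$ forces the move to the non-dominating set $\{v\}$, and $\edn(\Acal_n)\ge 2$. For the matching upper bound I would apply the clique-partition bound of \cite[Theorem 4]{IODG}: the vertex set is covered by the two cliques $\{a_1,\dots,a_{2n+1},c\}$ and $\{b_1,\dots,b_{2n}\}$, so $\edn(\Acal_n)\le 2$.

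For the autonomous number the governing vertex is $a_{2n+1}$, whose only neighbours are the other $a_i$ and $c$, so it has no $b$-neighbour. The dangerous configurations are the ``all-$a$'' dominating sets $\{a_m\}\cup\{a_{2n-m+1},\dots,a_{2n}\}$ of size $m+1$: the single low guard $a_m$ covers $b_1,\dots,b_{2n-m}$, while each high guard $a_j$ is the \emph{unique} cover of $b_j$. Such a set dominates but is not secure dominating, because an attack on $a_{2n+1}$ can only be answered by sliding some $a$-guard onto $a_{2n+1}$, and any such move strands the $b_j$ that guard alone was covering. To prove $\adn(\Acal_n)\ge n+1$ I would, for each $k\le n$, invoke Corollary \ref{cor:supersetOfIndependent} with the independent dominating set $I=\{a_1,b_{2n}\}$, whose complement is connected through $c$, to force a size-$k$ superset of $I$ into any autonomous family; then I would drive the guards by legal moves (first sliding the $b_{2n}$-guard onto $a_{2n}$, then repositioning the low guard) into the non-secure configuration of size $k$. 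Closure, the third condition of Definition \ref{autonomousFamilyDefinition}, then places a non-secure set in the family, contradicting the second condition.

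For the upper bound $\adn(\Acal_n)\le n+1$ I would exhibit an explicit autonomous family of dominating sets of size $n+1$ and verify the three conditions of Definition \ref{autonomousFamilyDefinition}. The design principle is to keep the guards ``spread'' enough that $a_{2n+1}$ stays defendable: one restricts to configurations in which the occupants of the $b$-clique are never \emph{all} simultaneously load-bearing, so that at every moment one guard can peel off to answer an attack on $a_{2n+1}$ while the remaining $n$ guards still dominate. Here I would lean on the facts that $c$ is universal and that a single occupant of either clique dominates that whole clique, which makes almost every attack routine to answer.

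The hard part is precisely this last step. Because autonomous domination is not super-hereditary, producing secure sets of size $n+1$ does not suffice; one must check closure, namely that \emph{no} sequence of legal guard moves can push the configuration into one of the non-secure all-$a$ positions above. Controlling this migration — tracking how guards can cross between the two cliques along the matching edges $(a_i,b_i)$ and the staircase edges of the fourth bullet while $c$ is unoccupied — is the crux of the argument and the point at which the exact threshold is pinned down.
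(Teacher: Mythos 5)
Your computations of $\gamma$ and \edn\ are correct and essentially the paper's (the paper gets $\edn(\Acal_n)\ge 2$ from the independent pair $\{a_{2n+1},b_1\}$; your elimination of singletons is an equivalent route, and the clique-cover upper bound is identical). Your lower bound for \adn\ is also sound and is the paper's core idea: the ``staircase'' all-$a$ configurations are dominating but not secure, and reachability plus the closure condition of Definition~\ref{autonomousFamilyDefinition} forces them into any would-be family. You route reachability through Corollary~\ref{cor:supersetOfIndependent} with $I=\{a_1,b_{2n}\}$, where the paper routes through the universal vertex $c$ (any dominating set is adjacent to one containing $c$, and all $c$-containing sets of equal size are mutually reachable); both routes work.

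The genuine gap is the upper bound $\adn(\Acal_n)\le n+1$: you exhibit no family and explicitly defer the closure verification that you correctly call the crux. Worse, that verification cannot succeed, because your own lower-bound mechanism applies verbatim at size $k=n+1$. Take the superset $T=\{a_1,b_{2n},a_{n+1},\dots,a_{2n-1}\}$ of $I$, of size $n+1$; Corollary~\ref{cor:supersetOfIndependent} puts $T$ in any hypothetical family. Slide the $b_{2n}$-guard to $a_{2n}$, then slide the low guard $a_1\to a_2\to\dots\to a_n$. Every intermediate set is dominating (the low guard at $a_i$, $i\le n$, covers $b_1,\dots,b_{2n-i}\supseteq\{b_1,\dots,b_n\}$, and each high guard covers its matched $b_j$), so closure forces $S^*=\{a_n,a_{n+1},\dots,a_{2n}\}$ into the family. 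But $S^*$ is dominating and not secure: answering an attack at $a_{2n+1}$ with the guard at $a_n$ strands $b_1$ (whose only $a$-neighbours are $a_1,\dots,a_n$), and answering with a guard at $a_j$, $j\ge n+1$, strands $b_j$ (whose only neighbours are $a_j$ and $a_i$ with $i\le 2n-j\le n-1$). Hence no autonomous family of size $n+1$ exists, and the threshold you were trying to pin down is not $n+1$ at all.

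It is worth recording how this compares with the paper. The paper's upper bound rests on Theorem~\ref{thm:secDomBound} together with the assertion that every dominating set of $n+1$ vertices among the $a_i$ is secure dominating; the set $S^*$ above contradicts exactly that assertion, and it is precisely the $g=n+1$ instance of the paper's own lower-bound construction, which the paper cuts off at $g\le n$. So your instinct that the closure/migration check is where ``the exact threshold is pinned down'' was right — but the check fails rather than succeeds, the paper's proof breaks at the same point, and the argument in fact shows $\adn(\Acal_n)\ge n+2$.
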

\begin{proof}
  The vertex $c$ is adjacent to every other vertex, so $\gamma(\Acal_n)=1$. The vertices $a_{2n+1}$ and $b_{1}$ are not adjacent, so $\edn(\Acal_n)\ge 2$. That two guards suffice follows from observing that $\{c,a_1,a_2,\dots,a_{2n+1}\}$ and $\{b_1, b_2, \dots, b_{2n}\}$ induce complete subgraphs.

  Consider any natural number $g$ satisfying $2 \le g \le n$. There is no autonomous dominating set of size $g$. Any dominating set is adjacent to one which includes $c$. Since $c$ is adjacent to every vertex, we see that any two sets of the same size, both of which contain $c$, are adjacent through a series of legal guard moves. An arbitrary dominating set of size $g$ is thus connected to $\{a_{2n--g+2}, a_ {2n-g+3} \dots, a_{2n},c\}$. This set is adjacent to $\{a_{g-1},a_{2n-g+2}, a_ {2n-g+3} \dots, a_{2n}\}$, which is a dominating set. It is not secure dominating, however, since an attack on $a_{2n+1}$ leads to failure. 

  It remains to show that $\Acal_{n}$ has an autonomous dominating set of size $n+1$. Any dominating set of $n+1$ vertices among the $a_i$ is secure dominating. No set consisting only of $b$ vertices is dominating, since none is adjacent to $a_{2n+1}$. Thus the only remaining case to check is that a dominating set including $c$ is secure dominating. It follows immediately from the existence of induced complete subgraphs that a dominating set including both $a$ and $b$ vertices, as well as $c$, is secure dominating. The set $\{c,b_1,b_2,\dots,b_{2n}\}$ is also secure dominating, since it is dominating, and exchanging any element with an adjacent $a$ vertex leads to a set containing both $a$ and $b$ vertices. Such a set is dominating.
\end{proof}

\begin{definition}
  Given a non-negative integers $m$ and $n$, define the graph $\Bcal_{m,n}$ with vertices
$$\{a_1,a_2,\dots a_{2n+3},b_1,b_2,\dots,b_{2n+4},c^1_1,c^2_1,c^1_2,c^2_2,\dots,c^1_m,c^2_m\}$$ and the following edges.
  \begin{itemize}
  \item  The vertices $a_i$ induce a complete subgraph.
  \item  The vertices $b_i$ induce a complete subgraph.
  \item $(a_i,b_i)$ for all $1 \le i \le n$
  \item $(c^1_i,c^2_i)$ for all $1 \le i \le m$
  \item $(b_{n+1},c^1_i)$ for all $1 \le i \le m$
  \item $(a_1,b_2)$ and $(a_2,b_1)$
  \item $(b_{3+i},a_{3+j})$ for each $0 \le i \le n$ and $0 \le j \le 2n - i$
  \item There are no other edges.
  \end{itemize}
\end{definition}

A sketch of such a graph is in Figure \ref{fig:HouseWithSpikes}. There the parameter values are $m=2$ and $n=0$. 

\begin{figure}
\centering
\caption{The graph $\Bcal_{2,0}$}
\label{fig:HouseWithSpikes}
\begin{framed}
  \includegraphics[trim=-25 0 -25 0, clip]{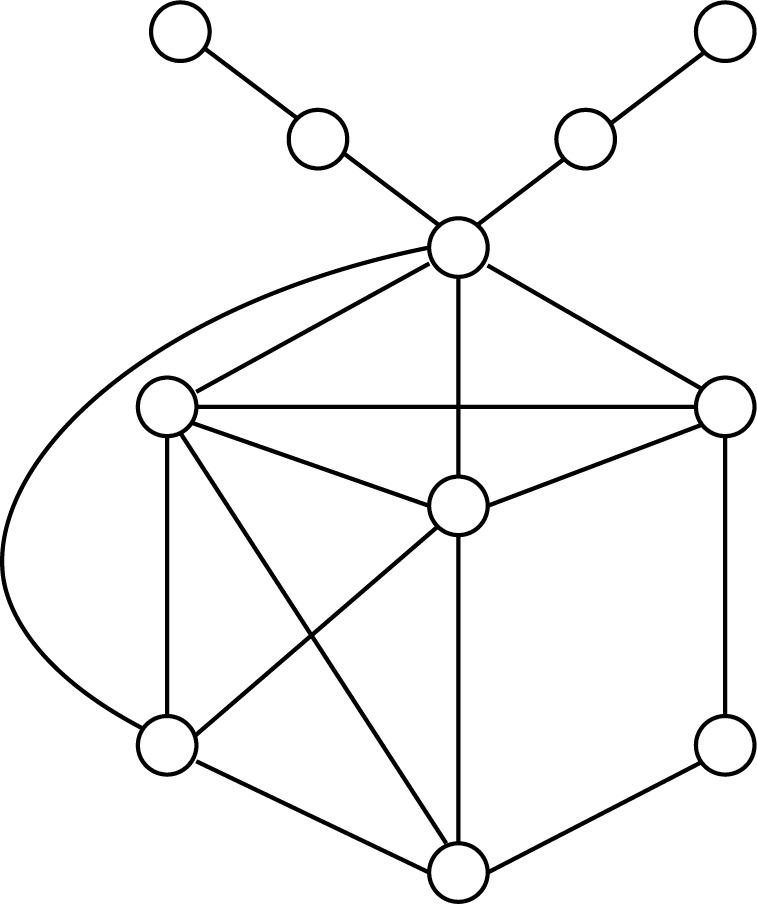}
\end{framed}
\end{figure}

\begin{proposition}
The domination parameters of $\Bcal_{m,n}$ are:  ${\gamma(\Bcal_{m,n})=\edn(\Bcal_{m,n}) = m+2}$, and ${\adn(\Bcal_{m,n})=m+n + 3}$.
\end{proposition}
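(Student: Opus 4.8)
The plan is to treat the three values separately: the two easier equalities $\gamma(\Bcal_{m,n}) = \edn(\Bcal_{m,n}) = m+2$ via a clique partition together with a counting argument, and the harder equality $\adn(\Bcal_{m,n}) = m+n+3$ by the two-sided method already used for $\Acal_n$ and for paths and cycles.

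For $\gamma = \edn = m+2$, first note that the $a$-clique, the $b$-clique, and the $m$ spike-edges $\{c^1_i,c^2_i\}$ partition the vertex set into $m+2$ induced complete subgraphs, so the clique-partition bound of \cite[Theorem 4]{IODG} gives $\edn \le m+2$. Since $\gamma \le \edn$ always, it suffices to prove $\gamma \ge m+2$. Each leaf $c^2_i$ can be dominated only from within its own spike, which forces $m$ pairwise distinct guards confined to the spikes, and these dominate nothing outside the spikes except $b_{n+1}$. I would then observe that $a_1$ is adjacent outside the $a$-clique only to $b_1,b_2$, that $a_{2n+3}$ is adjacent outside only to $b_3$, and that $b_{2n+4}$ is adjacent only within the $b$-clique; since $\{b_1,b_2\}$ and $\{b_3\}$ are disjoint, no single vertex dominates the whole $a$-clique together with $b_{2n+4}$. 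Hence at least two further guards are needed, giving $\gamma \ge m+2$ and therefore $\gamma = \edn = m+2$.

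For the upper bound $\adn \le m+n+3$, I would follow the pattern of the $\Acal_n$ computation and prove that every dominating set of size $m+n+3$ is secure dominating, invoking Theorem \ref{thm:secDomBound}. The verification is a case analysis on how the guards distribute among the spikes, the $a$-clique, and the $b$-clique: with this many guards, domination forces enough redundancy that any attacked vertex has a defender whose departure leaves every exposed vertex still covered, the only delicate point being that an attack on $c^2_i$ is met by the guard at $c^1_i$, whose move is safe because $b_{n+1}$ retains a dominator once the $b$-clique is adequately guarded. Should some dominating set of this size fail to be secure, I would instead exhibit a closed subfamily of secure sets.

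The lower bound $\adn \ge m+n+3$ is the main obstacle, because autonomous domination is not super-hereditary and so \emph{every} size $g$ with $m+2 \le g \le m+n+2$ must be ruled out. For each such $g$, I would assume an autonomous family $\Fcal$ of that size and apply Lemma \ref{lemma:includeIndependent} to the independent dominating set $I = \{c^1_1,\dots,c^1_m,a_{2n+3},b_{2n+4}\}$ to obtain $S \in \Fcal$ with $I \subseteq S$, leaving only $g-(m+2) \le n$ surplus guards (any guard trapped at some $c^2_i$ only reduces the effective surplus and so helps the argument). Using the staircase edges $(b_{3+i},a_{3+j})$ exactly as higher-index guards are accumulated in the proof of Proposition \ref{prop:adnPath}, I would perform legal moves that drive the surplus guards together and vacate the extreme vertices, reaching a dominating set in $\Fcal$ in which one clique is held by a single overloaded guard. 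A final pair of legal moves then yields a dominating set that is not secure dominating, the attack being on $a_{2n+3}$, whose only outside neighbor is $b_3$ and whose sole safe defender is simultaneously the unique dominator of a low-index $b$ vertex. Since every member of an autonomous family is secure dominating, this contradicts closure (condition~3 of Definition \ref{autonomousFamilyDefinition}).

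The hard part will be the bookkeeping in this last step: one must produce, uniformly in $g$, an explicit sequence of legal moves each preserving domination across the intricate staircase of cross-edges, and then confirm that the terminal configuration genuinely fails security at $a_{2n+3}$. I expect the small parameter values $m=0$, $n=0$, and $n=1$ to require separate direct treatment, as in the base cases of the path and cycle propositions, since there the staircase degenerates and the counting margins vanish.
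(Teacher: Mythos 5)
Your computation of $\gamma$ and $\edn$ is correct and is essentially the paper's: the clique partition into $\{a_i\}$, $\{b_i\}$, and the $m$ spikes gives $\edn(\Bcal_{m,n})\le m+2$, and your observation that no single vertex can simultaneously dominate $a_1$, $a_{2n+3}$, and $b_{2n+4}$ gives $\gamma \ge m+2$. Your plan for $\adn(\Bcal_{m,n})\le m+n+3$ (show every dominating set of size $m+n+3$ is secure dominating, then invoke Theorem \ref{thm:secDomBound}) is also the paper's route, but you have not carried out the case analysis, and the case you flag as delicate (attacks on the leaves $c^2_i$) is not the hard one: the paper's effort goes into the configuration with no guard on any $b$ vertex, where one must identify which $a$-guards are forced by domination and check that an attacked $a$ vertex still has a safe defender. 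Your fallback clause (``should some dominating set of this size fail to be secure, I would instead exhibit a closed subfamily'') is a hedge, not an argument.

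The genuine gap is in the lower bound $\adn(\Bcal_{m,n}) \ge m+n+3$, which is the heart of the proposition: you defer exactly the step that carries the mathematical content, never exhibiting the terminal configuration, the sequence of legal moves reaching it, the verification that each move preserves domination, or the verification that security fails at the end, uniformly in the size $g$. Moreover, your choice of independent dominating set $I = \{c^1_1,\dots,c^1_m,a_{2n+3},b_{2n+4}\}$ makes this unnecessarily hard, because it blocks the tool the paper built for precisely this step: Corollary \ref{cor:supersetOfIndependent} requires the subgraph induced by the complement of $I$ to be connected, and in the complement of your $I$ every leaf $c^2_i$ is an isolated vertex (its only neighbor $c^1_i$ lies in $I$), so the corollary cannot be invoked. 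The paper instead occupies the leaves, taking $I = \{a_{2n+3},b_1,c^2_1,\dots,c^2_m\}$; then the complement (which contains each $c^1_i$, attached to $b_{n+1}$) is connected, the corollary places the $k-1$ surplus guards directly at $T = \{b_{2n+5-k},\dots,b_{2n+3}\}$, and a single further legal move $(a_{2n+3},b_{2+k})$ produces a dominating set in the family that is not secure dominating --- one stroke, no staircase accumulation, no analysis of guards trapped on leaves, and no separate treatment of small $m$ or $n$. To close the gap you must either switch to the paper's choice of $I$ and cite Corollary \ref{cor:supersetOfIndependent}, or actually carry out, move by move, the accumulation argument you only sketched.
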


\begin{proof}

   The set $I = \{a_{2n+3},b_{1},c^2_1,c^2_2,\dots,c^2_m\}$ is an independent dominating set of size $m+2$. There is no smaller dominating set, since at least one guard is needed for each of the $m$ branches $\{c^1_i,c^2_i\}$ and none of these will guard $b_i$ for $i \le 2n+3$ or any vertex among the $a_j$.

  The eternal domination number is also $m+2$ by the clique bound of \cite[Theorem 4]{IODG}, since one guard can be allocated to each of the $m$ pairs $\{c^1_i,c^2_i\}$, one can be allocated to the vertices $a_i$, and finally one to the vertices $b_j$.

  We now show that the autonomous domination number is not less than $m+n + 2$. Let $k$ be a natural number satisfying $m+2 \le m+1+k < m+n+3$, and let $m+1+k$ be the size of some dominating set $S$ of vertices which contains the independent set $I$. Let $T$ be the (possibly empty) set of $k-1$ vertices $\{b_{2n+3 - k + 2},b_{2n+3-k+3}, \dots, b_{2n+3}\}$. Since the graph induced by the complement of $I$ is connected, Corollary~\ref{cor:supersetOfIndependent} implies that there is a series of legal guard moves connecting $S$ with the dominating set $S' = \{a_{2n+3}, b_1, c_1^2,c_2^2,\dots,c_m^2\} \cup T$.

  Let $S'$ and $S''$ be adjacent by the legal move $(a_{2n+3},b_{2+k})$. Observe that $S''$ is dominating, since each element of $T$ is adjacent to exactly one of $\{a_i\}_{i=2n+3 - k + 2}^{2n+3}$ and $b_{2+k}$ is adjacent to each of $\{a_3, a_4, \dots, a_{2n+3-k+1}\}$.

The set $S''$ is not, however, secure dominating. Given an attack at $b_{2n+1}$, no guard can cover without losing domination. This shows that the autonomous domination number is greater than $m+n+2$.

  Now we must show that $m+n+3$ guards suffice for autonomous domination. Consider any dominating set of this size. It necessarily contains at least $m$ vertices from among $c^j_i$. It follows that the set contains at most $n+3$ vertices from $\{a_i\}$ and $\{b_j\}$. For the sake of being a dominating set, we also know that there are at least two such vertices.
  
  There are a few cases to consider.

  \textit{Case 1: Guards are at $a$ and $b$ vertices.}
  This case is straightforward. Since $\{a_i\}$ and $\{b_i\}$ are each cliques, the set is secure dominating.

  \textit{Case 2: No guard is at any $b$ vertex.}
  In order for the set to be dominating, it necessarily contains the $n$ vertices $\{a_{n+4}, a_{n+5}, \dots, a_{2n+3}\}$ (each of which is the unique vertex among the $\{a_i\}$ adjacent to the similarly indexed $b_i$). The set must in addition contain at least one of $\{a_1,a_2\}$.

  Either some guards are at $a$ vertices and some are at $b$ vertices, in which case the set is secure dominating, or all guards are on $a$ vertices. Only the latter case deserves further consideration. An attack at a $b$ vertex can be covered, by domination and the fact that the $a$ and $b$ vertices induce respective complete graphs. An attack at an $a$ vertex is securely coverable, which is seen as follows. Let $a_i$ be the first occupied $a$ vertex (counting up from index $1$). Then all guards in the vertices from $a_{i+1}$ to $a_{2n-i}$ are redundant. That means that at most the vertices from $a_{2n-i+1}$ up to $a_{2n}$ are additional necessary guards. That is $i$ total. By counting, $i$ is at most $n$. (The situation $i=n+1$ is impossible, since then the occupied vertices would be $n+1$ through $2n+1$, and this is not a dominating set.)
\end{proof}

\begin{definition}
  Given a natural number $m\ge 2$ and a natural number $n$, define the graph $\Ccal_{m,n}$ with vertices are $$\{a_1 a_2, b_1, b_2, \dots,b_m, c_1, c_2, \dots, c_n\}$$ and the following edges.
  \begin{itemize}
  \item Each vertex $a_i$ ($i=1,2$) is adjacent to every other vertex.
  \item The set of vertices $\{c_i\}_{i=1}^n$ induces a complete subgraph.
  \item There are no other edges.
  \end{itemize}
\end{definition}

The graph $\Ccal_{3,3}$ is depicted in Figure \ref{fig:FattenedStar}.

\begin{figure}
  \caption{The graph $\Ccal_{3,3}$}
  \centering
  \label{fig:FattenedStar}
\begin{framed}
 \includegraphics[trim=-40 0 -40 0, clip]{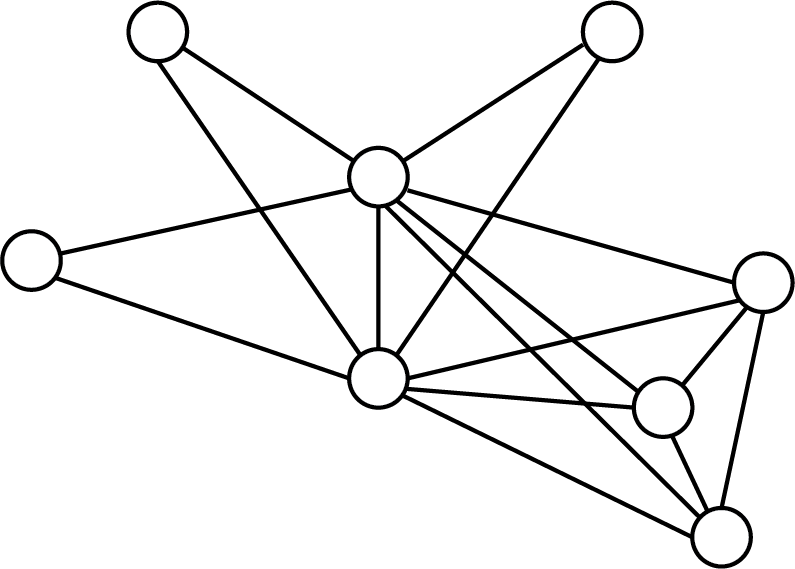}
\end{framed}
\end{figure}

\begin{proposition}
  The domination parameters of $\Ccal_{m,n}$ are these: ${\gamma(\Ccal_{m,n}) = 1}$, ${\edn(\Ccal_{m,n}) = m + 1}$, and ${\adn(\Ccal_{m,n}) = m+n}$.
\end{proposition}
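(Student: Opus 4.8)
The plan is to establish the three parameters in turn, spending almost all the effort on the lower bound for $\adn$. The vertex $a_1$ is adjacent to every other vertex, so $\{a_1\}$ is a dominating set and $\gamma(\Ccal_{m,n}) = 1$. For the eternal domination number I would pair the two standard bounds: the set $\{b_1,\dots,b_m,c_1\}$ is independent of size $m+1$ (the $b_j$ are pairwise nonadjacent and nonadjacent to any $c_k$), giving $\edn \ge m+1$, while the clique partition $\{a_1,c_1,\dots,c_n\}$, $\{a_2,b_1\}$, $\{b_2\},\dots,\{b_m\}$ into $m+1$ cliques gives $\edn \le m+1$ by the clique bound of \cite[Theorem 4]{IODG}; hence $\edn(\Ccal_{m,n}) = m+1$.

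For the upper bound $\adn \le m+n$ I would simply invoke Lemma \ref{boundedAboveByFoolproof}. The order of $\Ccal_{m,n}$ is $m+n+2$, and the minimum degree is $2$, attained by each $b_j$, whose only neighbors are $a_1$ and $a_2$. The foolproof quantity is therefore $(m+n+2)-2 = m+n$.

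The heart of the matter is the matching lower bound $\adn \ge m+n$, which amounts to showing that for every $g \le m+n-1$ there is no autonomous dominating set of size $g$. The key observation is that $\{a_2\}$ is itself an independent dominating set whose complement $\{a_1,b_1,\dots,b_m,c_1,\dots,c_n\}$ induces a connected subgraph, since $a_1$ is joined to all of these vertices. Assuming toward a contradiction that some autonomous family $\Fcal$ has members of size $g$, Corollary \ref{cor:supersetOfIndependent} then forces \emph{every} $g$-element set containing $a_2$ to belong to $\Fcal$. I would exploit this by choosing a deliberately bad such set $S^*$: put $a_2 \in S^*$ and $a_1 \notin S^*$, and distribute the remaining $g-1$ guards first onto the $n$ vertices $c_k$ and only then, if necessary, onto the $b_j$. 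Because $g-1 \le m+n-2$, at least two vertices $b_j$ remain unoccupied. This $S^*$ is dominating since $a_2$ is universal, so $S^* \in \Fcal$.

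Finally I would check that $S^*$ violates the second condition of Definition \ref{autonomousFamilyDefinition}, the contradiction we seek. Consider an attack at an unoccupied vertex $b_j$. Its only neighbors are $a_1$ and $a_2$, and $a_1 \notin S^*$, so the sole guard that can respond sits at $a_2$; but moving it to $b_j$ leaves no guard on either universal vertex, stranding the second unoccupied $b$-vertex and destroying domination. Thus the move is illegal and $S^*$ admits no legal response, so it is not secure dominating. The main obstacle is conceptual rather than computational: one must recognize that the apparent strength of the two universal vertices is exactly the weakness to exploit, since forcing the guards to pile onto the clique $\{c_k\}$ together with a single universal vertex strands the low-degree vertices $b_j$. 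Once $S^*$ is identified, the bookkeeping confirming that at least two $b_j$ stay empty across the whole range $g \le m+n-1$ is routine.
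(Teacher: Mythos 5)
Your proof is correct, and its core---the lower bound---rests on the same mechanism as the paper's: force the guards into a configuration in which one universal vertex is occupied, the clique $\{c_k\}$ is filled, and at least two vertices $b_j$ are empty, then observe that an attack on an empty $b_j$ can be met only by the guard on the universal vertex, whose move strands the other empty $b_j$. The paper reaches this bad configuration by arguing inline that any dominating set is adjacent to one containing $a_1$, after which guards move freely under $a_1$'s protection; this is exactly the reasoning that Corollary \ref{cor:supersetOfIndependent} packages, so your invocation of that corollary with $I=\{a_2\}$ (whose complement is connected through $a_1$) is the same argument, cited rather than re-derived, and arguably cleaner. Where you genuinely diverge is the upper bound: the paper checks that every dominating set of size $m+n$ is secure dominating (splitting on whether a central vertex $a_k$ is occupied) and applies Theorem \ref{thm:secDomBound}, which in passing exhibits an explicit autonomous family, namely all dominating sets of size $m+n$; you instead invoke Lemma \ref{boundedAboveByFoolproof} with order $m+n+2$ and minimum degree $2$ (realized by the $b_j$), getting $\adn \le m+n$ in one line because the foolproof bound happens to be tight for this family. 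Both routes are valid, your bookkeeping (at least two $b_j$ remain empty whenever $g \le m+n-1$) is right, and your $\edn$ computation via the clique partition $\{a_1,c_1,\dots,c_n\}$, $\{a_2,b_1\}$, $\{b_2\},\dots,\{b_m\}$ matches the paper's clique-assignment argument in substance. No gap.
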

\begin{proof}
  Since $a_1$ is adjacent to every vertex the domination number is 1.

  The set $\{b_1, b_2, \dots. b_m, c_1\}$ is an independent set, so that the eternal domination number is at least $m+1$. In fact $m+1$ suffice. Let one guard be assigned to $\{b_1, a_1, a_2\}$, and let one guard be assigned to each vertex $b_i$ ($i>2$). Finally, let a guard be assigned to $\{c_1, c_2, \dots , c_n\}$.

  Let any number $k$ of guards from $m +1$ to $m+n-1$ be given, and consider any dominating set of size $k$. Such a dominating set is adjacent to one containing the vertex $a_1$. Then, via a series of legal guard moves, we arrive at the dominating set $\{a_1, c_1, c_2, \dots, c_n, b_1, b_2 , \dots , b_{k-n-1}\}$. This is not a secure dominating set, since an attack at $b_{k-n}$ can only be covered by the guard at $a_1$, and then no guard is adjacent to $b_{k-n+1}$.
  
A set with $m+n$ vertices either contains all the peripheral vertices $b_i$ and $c_j$, or it contains one of the central vertices $a_k$. In either case such a set is secure dominating.
  
\end{proof}

\begin{definition}
  Given natural numbers $m$ and $n$, define the graph $\Dcal_{m,n}$  with vertices $$\{a_1, a_2, b_1^1, b_1^2, b_2^1, b_2^2, \dots , b_m^1, b_m^2, c_1, c_2, c_3, \dots , c_n\}$$ and the following edges.
  \begin{itemize}
  \item The set of vertices $\{a_1, a_2, b_1^1, b_2^1, \dots, b_m^1\}$ induces a complete subgraph.
  \item $(b_i^1,b_i^2)$ for each $i$, $1 \le i \le m$.
  \item The set of vertices $\{a,_2,c_1, c_2, \dots , c_n\}$ induces a complete subgraph.
  \item There are no other edges.
  \end{itemize}
\end{definition}

The graph $\Dcal_{3,3}$ is depicted in Figure \ref{fig:ManyLeavesOneFat}. The oval encloses vertices inducing a complete subgraph.
\begin{figure}
\caption{The graph $\Dcal_{m,n}$}
  \centering
  \label{fig:ManyLeavesOneFat}
  
\begin{framed}
 \includegraphics[ trim=-40 0 -40 0, clip]{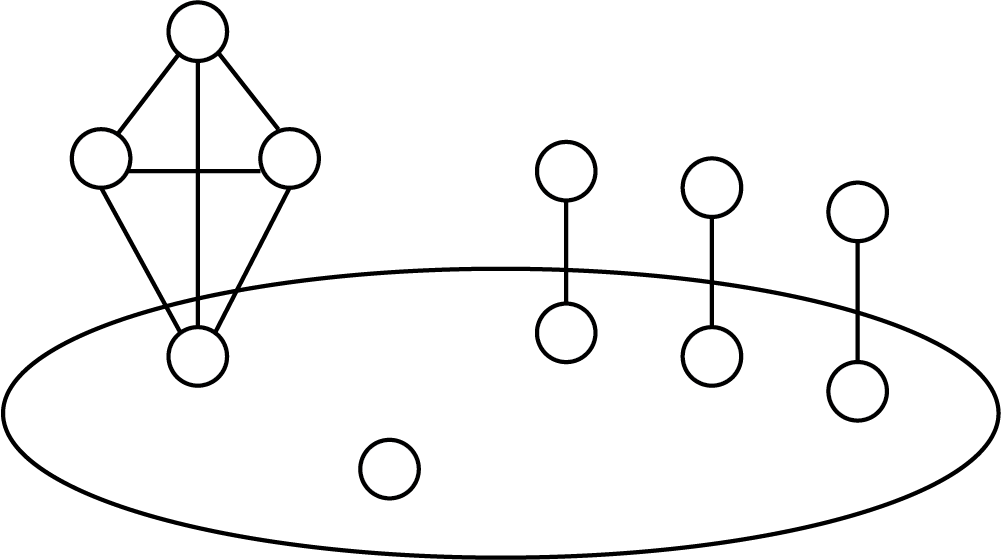}
\end{framed}
\end{figure}

\begin{proposition}
  The domination parameters of $\Dcal_{m,n}$ are these: $\gamma(\Dcal_{m,n}) = m+1$, $\edn(\Dcal_{m,n}) = m+2$, and $\adn(\Dcal_{m,n}) = m+n+1$.
\end{proposition}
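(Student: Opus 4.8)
The plan is to compute the three parameters in turn, reserving most of the effort for $\adn$. Throughout I will picture $\Dcal_{m,n}$ as a central clique $\{a_1,a_2,b_1^1,\dots,b_m^1\}$ carrying a pendant vertex $b_i^2$ off each $b_i^1$, together with a second clique $\{a_2,c_1,\dots,c_n\}$ joined to the rest only through $a_2$. For the domination number, the $m$ pendants have pairwise disjoint closed neighborhoods $\{b_i^1,b_i^2\}$, forcing $m$ guards; since none of these vertices dominates any $c_j$, one further guard is required for the $c$-clique, while $\{b_1^1,\dots,b_m^1,a_2\}$ shows $m+1$ suffice, so $\gamma(\Dcal_{m,n})=m+1$. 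For the eternal domination number, $\{a_1,b_1^2,\dots,b_m^2,c_1\}$ is independent of size $m+2$, giving $\edn\ge m+2$, and allocating one guard to each pair $\{b_i^1,b_i^2\}$, one to $\{a_1\}$, and one to $\{a_2,c_1,\dots,c_n\}$ gives $\edn\le m+2$ by the clique bound of \cite[Theorem 4]{IODG}; hence $\edn(\Dcal_{m,n})=m+2$.

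For the upper bound $\adn\le m+n+1$ I will show that every dominating set $S$ of size $m+n+1$ is secure dominating and apply Theorem \ref{thm:secDomBound}. The only delicate attack is at an unoccupied pendant $b_i^2$, which only $b_i^1$ can answer; this response fails exactly when $b_i^1$ is the sole dominator of $a_1$ (the analogous failures for $a_2$ or another $b_j^1$ are impossible, since each would force an undominated $c_j$ or $b_j^2$). That situation requires $a_1,a_2\notin S$ and no other $b_j^1\in S$, which forces each remaining pendant to be covered by its own $b_j^2$, using $m$ guards in all and leaving $n+1$ guards that can only occupy $c_1,\dots,c_n$ — impossible by counting. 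The remaining attacks (at $a_1$, at $a_2$, at an interior $b_i^1$, or at an unoccupied $c_j$) each admit a safe responder, again because the same counting rules out the only potentially bad distributions of $m+n+1$ guards.

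For the lower bound I rule out every size $k$ with $m+2\le k\le m+n$ (an empty range unless $n\ge 2$, with $\adn\ge\edn=m+2$ covering everything below). Fix such a $k$ and suppose $\mathcal{F}$ is an autonomous dominating family of sets of size $k$. The set $I=\{a_1,b_1^2,\dots,b_m^2,c_1\}$ is independent and dominating, and $V\setminus I$ induces a connected subgraph (every vertex there is adjacent to $a_2$), so Corollary \ref{cor:supersetOfIndependent} places $T=I\cup\{c_2,\dots,c_{k-m-1}\}$ in $\mathcal{F}$. Starting from $T$, the legal moves $b_1^2\to b_1^1$, then $a_1\to a_2$, then $a_2\to c_{k-m}$ (each easily checked to preserve domination, and $c_{k-m}$ available because $k-m\le n$) reach $T^{*}=\{b_1^1,b_2^2,\dots,b_m^2,c_1,\dots,c_{k-m}\}$, which lies in $\mathcal{F}$ by closure. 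But $T^{*}$ is not secure dominating: an attack at $b_1^2$ can be answered only by $b_1^1$, and moving $b_1^1\to b_1^2$ leaves $a_1$ undominated, since then neither $a_2$ nor any $b_j^1$ is occupied. This contradiction shows no such $\mathcal{F}$ exists, so $\adn\ge m+n+1$, and with the upper bound $\adn(\Dcal_{m,n})=m+n+1$.

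The main obstacle is the lower bound, and specifically confirming that the bad configuration $T^{*}$ is reachable by a chain of domination-preserving moves from a set guaranteed to lie in $\mathcal{F}$: each intermediate move must be individually verified, and the inequality $k\le m+n$ is exactly what creates room for the surplus guards in the $c$-clique. Its mirror image — that $n+1$ surplus guards cannot all hide among $c_1,\dots,c_n$ — is the counting step that makes every dominating set of size $m+n+1$ secure dominating, and together these two observations pin the value at $m+n+1$.
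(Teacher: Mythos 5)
Your proof is correct, and your lower-bound argument is essentially the paper's: both start from a member of the autonomous family containing the independent dominating set $\{a_1,b_1^2,\dots,b_m^2,c_1\}$ together with surplus guards on $c_2,\dots,c_{k-m-1}$, and both execute the same three legal moves $b_1^2\to b_1^1$, then $a_1\to a_2$, then $a_2\to c_{k-m}$ to reach $\{b_1^1,b_2^2,\dots,b_m^2,c_1,\dots,c_{k-m}\}$, which fails against an attack at $b_1^2$ because answering it abandons $a_1$. The notable difference is that you prove more than the paper does: the paper's proof stops after this lower bound and never verifies that $m+n+1$ guards actually suffice, whereas you supply the upper bound by showing that every dominating set of size $m+n+1$ is secure dominating --- the key counting step being that a failed response at a pendant $b_i^2$ would force $a_1,a_2$ and all other $b_j^1$ to be unoccupied, leaving $n+1$ guards to fit into the $n$ vertices $c_1,\dots,c_n$ --- and then invoking Theorem \ref{thm:secDomBound}. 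You are also more careful on two points the paper glosses over: you explicitly check the hypothesis of Corollary \ref{cor:supersetOfIndependent} (the subgraph induced by $V\setminus I$ is connected through $a_2$), and you note that the range $m+2\le k\le m+n$ is empty when $n=1$, in which case $\adn\ge\edn=m+2=m+n+1$ already closes that case. Your clique partition for the eternal upper bound ($\{a_1\}$, the pairs $\{b_i^1,b_i^2\}$, and $\{a_2,c_1,\dots,c_n\}$) differs trivially from the paper's ($\{a_1,a_2\}$, the pairs $\{b_i^1,b_i^2\}$, and $\{c_j\}_{j=1}^n$); both are valid.
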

\begin{proof}
  The vertices $\{b_1^1, b_2^1, \dots, b_m^1, a_2\}$ form a dominating set. No smaller dominating set exists. At least $m$ guards are required for the $m$ leaves $b_i^2$, and another is required for vertices $c_i$.

  The set $\{a_1,b_1^2, b_2^2, \dots, b_m^2, c_1\}$ is an independent set of size $m+2$, giving a lower bound for the eternal domination number. In fact $m+2$ guards suffice, since $\{a_1,a_2\}$, $\{\{b_i^1, b_i^2\}_{i=1}^m\}$, and $\{c_i\}_{i=1}^n$ is a partition of the vertices into $m+2$ subsets all inducing complete subgraphs.

  Let $k$ be a number of guards $m+2 \le k \le m+n$. Consider a dominating set of size $k$ containing the independent set of vertices $\{a_1, b_1^2, b_2^2, \dots , b_m^2, c_1\}$. This set has size $m+2$, so that there are $k-m-2$ remaining guards. Then via a series of legal guard moves we obtain the set $$\{a_1,  b_1^2, b_2^2, \dots , b_m^2, c_1, c_2, \dots, c_{k-m-1}\}$$ which is adjacent to $$\{a_1,  b_1^1, b_2^2, \dots , b_m^2, c_1, c_2, \dots, c_{k-m-1}\}$$ which is adjacent to $$\{a_2,  b_1^1, b_2^2, \dots , b_m^2, c_1, c_2, \dots, c_{k-m-1}\}$$ which is adjacent to $$\{  b_1^1, b_2^2, \dots , b_m^2, c_1, c_2, \dots, c_{k-m}\}$$ which is not secure dominating, since an attack at $b_1^2$ is only covered by the guard at $b_1^1$, leaving $a_1$ unprotected.
\end{proof}

\begin{definition}
  Let $m$ and $n$ be natural numbers. Define the graph $\Ecal_{m,n}$ with vertices  $$\{a_1, a_2, a_3, \dots, a_{m+3}, b_1, b_2, c_1, c_2, \dots , c_n\}$$ and the following edges.
  \begin{itemize}
  \item The vertices $a_i$ induce a complete subgraph.
  \item $(a_i,b_j)$, for each pair $(i,j) \in \{1,2\}\times \{1,2\}$.
  \item $(a_{m+3}, c_i)$ for each $i \in \{1, 2, \dots, n\}$.
  \end{itemize}
  
\end{definition}

The graph $\Ecal_{3,3}$ is depicted in Figure~\ref{TwoBridgeWithLeaves}. The vertices $a_i$, inducing a complete subgraph, are enclosed by the oval and their edges are omitted.r

\begin{figure}
  \caption{The graph $\Ecal_{3,3}$}
  \centering
  \label{TwoBridgeWithLeaves}
\begin{framed}
 \includegraphics[trim=-40 0 -40 0, clip]{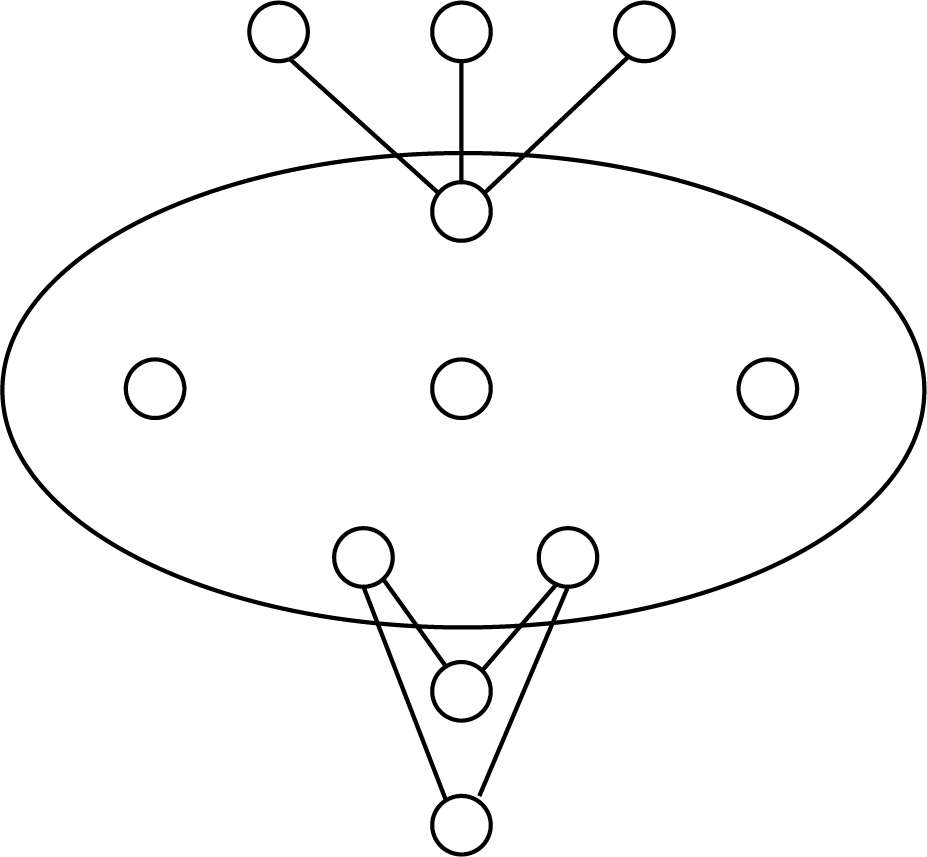}
\end{framed}
\end{figure}

\begin{proposition}
  The domination parameters of $\Ecal_{m,n}$ are these: ${\gamma(\Ecal_{m,n}) = 2}$, ${\edn(\Ecal_{m,n}) = n+3}$, and ${\adn(\Ecal_{m,n}) = m+n+3}$.
\end{proposition}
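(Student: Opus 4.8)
The plan is to establish the three parameters in turn, disposing of $\gamma$ and $\edn$ quickly and reserving the real effort for $\adn$. Since $a_1$ dominates the $a$-clique together with $b_1,b_2$ while $a_{m+3}$ dominates all the $c_i$, the set $\{a_1,a_{m+3}\}$ is dominating, and no single vertex dominates both a $b$ and a $c$, so $\gamma(\Ecal_{m,n})=2$. For the eternal domination number, the set $\{a_3,b_1,b_2,c_1,\dots,c_n\}$ is independent of size $n+3$, giving $\edn\ge n+3$; conversely the partition of the vertices into the single clique $\{a_1,\dots,a_{m+3}\}$ together with the singletons $\{b_1\},\{b_2\},\{c_1\},\dots,\{c_n\}$ exhibits $n+3$ cliques, so the clique bound of \cite[Theorem 4]{IODG} gives $\edn\le n+3$.

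For the lower bound $\adn\ge m+n+3$ I would fix the independent dominating set $I=\{a_3,b_1,b_2,c_1,\dots,c_n\}$ of size $n+3$ and note that its complement, consisting of the remaining $a$-vertices, induces a clique and is therefore connected. Suppose $\mathcal{F}$ were an autonomous family of sets of size $k$ with $n+3\le k\le m+n+2$. By Corollary~\ref{cor:supersetOfIndependent}, every size-$k$ set containing $I$ lies in $\mathcal{F}$; in particular so does $T=I\cup\{a_4,\dots,a_{k-n}\}$, where the added vertices are drawn from $\{a_4,\dots,a_{m+2}\}$ (possible precisely because $k\le m+n+2$). I would then perform the legal guard moves $b_1\to a_1$, $b_2\to a_2$, and $a_2\to a_{m+3}$, each of which keeps the configuration dominating and so stays inside $\mathcal{F}$ by closure. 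The resulting set has $a_1$ as the only guard among $\{a_1,a_2,b_1,b_2\}$, so an attack at $b_1$ can be met only by $a_1$, and the move $a_1\to b_1$ strands $b_2$. Hence this member of $\mathcal{F}$ is not secure dominating, contradicting the first two conditions of Definition~\ref{autonomousFamilyDefinition}. Together with $\adn\ge\edn=n+3$ (Lemma~\ref{boundedBelowByEternal}), which rules out the smaller sizes, this gives $\adn\ge m+n+3$.

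The main obstacle is the upper bound, where Theorem~\ref{thm:secDomBound} is unavailable: the size-$(m+n+3)$ dominating set obtained by deleting two of the $c$-vertices (and hence retaining $a_{m+3}$) is \emph{not} secure dominating, since an attack at one deleted leaf forces $a_{m+3}$ to abandon the other. The key observation I would exploit is that such a bad set is in fact adjacent to no dominating set whatsoever: its only unoccupied vertices are the two deleted leaves, each adjacent solely to $a_{m+3}$, so the only conceivable guard move is $a_{m+3}\to c_i$, which immediately undominates the other leaf. Consequently a bad set can never be a neighbor of a good set, and deleting the bad sets cannot disturb closure.

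I would therefore define $\mathcal{F}$ to be the collection of all size-$(m+n+3)$ dominating sets other than those omitting two $c$-vertices, and verify the three conditions of Definition~\ref{autonomousFamilyDefinition}. A short case analysis over the two deleted vertices shows that every member of $\mathcal{F}$ is secure dominating (the two-leaf deletion being the only failure mode, now excluded), which yields conditions (1) and (2); and since good sets have only good neighbors, $\mathcal{F}$ is closed under adjacency, giving condition (3). This produces an autonomous dominating set of size $m+n+3$, so $\adn\le m+n+3$, and with the lower bound we conclude $\adn(\Ecal_{m,n})=m+n+3$. The point demanding the most care is the case analysis certifying secure domination of every retained set, along with attention to small values of $m$ and $n$ where some deletions coincide or become unavailable.
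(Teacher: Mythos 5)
Your treatment of $\gamma$, of $\edn$, and of the lower bound $\adn(\Ecal_{m,n})\ge m+n+3$ coincides with the paper's almost move for move: the paper also takes the independent dominating set $\{b_1,b_2,a_3,c_1,\dots,c_n\}$, steers any hypothetical family of size $k$ with $n+3\le k\le m+n+2$ onto a configuration in which $a_1$ is the only guard adjacent to the unoccupied pair $b_1,b_2$, and derives the same contradiction with secure domination. Where you genuinely diverge is the upper bound, and there your route is not merely different but is actually a repair of the paper's argument. The paper dispatches $\adn\le m+n+3$ with one counting sentence --- ``at least two of $\{a_1,a_2,b_1,b_2\}$ will be occupied, hence neither $b_1$ nor $b_2$ will be unguarded'' --- which implicitly invokes Theorem~\ref{thm:secDomBound} and hence implicitly asserts that \emph{every} dominating set of size $m+n+3$ is secure dominating. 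As you observe, that assertion fails whenever $n\ge 2$: the set omitting two leaves $c_i,c_j$ is dominating, but an attack at $c_i$ admits no legal response, since the unique candidate move $a_{m+3}\to c_i$ abandons $c_j$; the paper's counting guards only $b_1,b_2$ and never examines attacks on the leaves. Your fix --- noting that such a bad set is adjacent to no dominating set whatsoever (its only vacancies are the two leaves, whose sole neighbor is $a_{m+3}$), so that the family of all size-$(m+n+3)$ dominating sets \emph{except} the bad ones still satisfies closure, while every retained set is secure dominating and its covering neighbors are automatically retained (a bad set cannot neighbor any dominating set) --- is exactly what is needed, and the deferred case analysis over the two omitted vertices does check out. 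What your approach buys is a correct proof, plus a nice illustration that an autonomous family need not consist of all dominating sets of a given size; the only additions I would make are the explicit remark that for $n\le 1$ no bad sets exist and Theorem~\ref{thm:secDomBound} applies verbatim, and a sentence verifying condition (2) of Definition~\ref{autonomousFamilyDefinition} (the adjacent set containing the attacked vertex lies in $\mathcal{F}$ because it has a dominating neighbor, hence is not bad).
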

\begin{proof}
  There is no single vertex adjacent to all vertices, but each vertex is adjacent to either $a_1$ or $a_{m+3}$. This establishes the domination number.

  The set $\{b_1, b_2, a_3, c_1, c_2, \dots, c_n\}$ is an independent set of size $n+3$, establishing a lower bound for the eternal domination number. That $n+3$ guards suffice is seen by noting that stationary guards can be assigned to each $b_i$ and $c_j$, and one final guard is needed for the complete graph induced by $\{a_i\}$.

  Let $k$ be a natural number $n+3 \le k \le m + n + 2$ and consider a dominating set of size $k$ which contains the independent set $\{b_1, b_2, a_3, c_1, c_2, \dots, c_n\}$. Through a series of legal guard moves we obtain a dominating set which contains $b_1$ and $b_2$ but neither $a_1$ nor $a_2$ nor $a_3$. This is adjacent to one containing $a_1$ but not $b_1$, and this is adjacent to one containing $a_2$ but not $b_2$. This is adjacent to the dominating set containing $a_3$ but not $a_2$. This set is not secure dominating. An attack at one of the vertices $b_i$ can only be addressed by the guard at $a_1$, at which point the other $b$ vertex is left without defense.

That $m+n+3$ guards suffice follows from counting. With $m+n+3$ guards at least two of $\{a_1,a_2,b_1,b_2\}$ will be occupied, hence neither $b_1$ nor $b_2$ will be unguarded.  
\end{proof}

\begin{definition}
  Given natural numbers $\ell$, $m$, and $n$, define the graph $\Fcal_{\ell,m,n}$ with vertices $$\{a_1, a_2, \dots, a_\ell, b_1, b_2, \dots, b_m, c_1, c_2, c_3,\dots, c_{\ell+n}\}$$ and the following edges.
  \begin{itemize}
  \item $(a_i,c_i)$ for each $i \in \{1,2,\dots,\ell\}$.
  \item $(b_i,c_j)$ for each $i \in \{1,2,\dots,m\}$ and each $j \in \{1,2,\dots,\ell\}$.
  \item The vertices $c_i$ induce a complete subgraph. 
  \item There are no other edges.
  \end{itemize}
  
\end{definition}

The graph $\Fcal_{4,3,3}$ is depicted in Figure~\ref{fig:ManyLeavesAndBridges}. The vertices $c_i$, inducing a complete subgraph, are enclosed in the oval and their edges are omitted.

\begin{proposition}
  The domination parameters of $\Fcal_{m,n}$ are these: ${\gamma(\Fcal_{\ell,m,n}) = \ell}$, ${\edn(\Fcal_{\ell,m,n}) = \ell+ m + 1}$, and ${\adn(\Fcal_{\ell,m,n}) = \ell + m + n}$.
  
\end{proposition}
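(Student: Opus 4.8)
The plan is to settle the two smaller parameters directly and then pin down $\adn$ by matching an upper and a lower bound. For $\gamma(\Fcal_{\ell,m,n})=\ell$, the closed neighbourhoods $N[a_i]=\{a_i,c_i\}$ of the leaves are pairwise disjoint, forcing $\gamma\ge\ell$, while $\{c_1,\dots,c_\ell\}$ dominates (each $a_i$ through $c_i$, and every $b_j$ and every $c$-vertex through the clique), giving $\gamma\le\ell$. For $\edn(\Fcal_{\ell,m,n})=\ell+m+1$, the set $\{a_1,\dots,a_\ell,b_1,\dots,b_m,c_{\ell+1}\}$ is independent of size $\ell+m+1$, so $\edn\ge\ell+m+1$; partitioning the vertices into the $\ell$ cliques $\{a_i,c_i\}$, the $m$ singletons $\{b_j\}$, and the single clique $\{c_{\ell+1},\dots,c_{\ell+n}\}$ exhibits $\ell+m+1$ cliques, so the clique bound of \cite{IODG} gives $\edn\le\ell+m+1$.

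For the upper bound $\adn\le\ell+m+n$ I would prove that every dominating set $S$ with $|S|=\ell+m+n$ is secure dominating and then invoke Theorem~\ref{thm:secDomBound}. Here $|V\setminus S|=\ell$, and I would inspect an attack at each kind of unoccupied vertex. An attacked leaf $a_i$ forces $c_i\in S$ and is answered by the move $c_i\to a_i$; a count using $|S|=\ell+m+n$ shows $c_i$ is neither the only clique guard nor, when any $b_j$ is unoccupied, the only guard among $c_1,\dots,c_\ell$, so domination survives. An attacked $b_j$ is answered by some $c_i\to b_j$ with $a_i\in S$, a guard again produced by counting. An attacked unoccupied $c_i$ with $i\le\ell$ forces $a_i\in S$ and is answered safely by $a_i\to c_i$. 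Finally an attacked surplus vertex $c_i$ ($i>\ell$) is answered by sliding a clique guard along a clique edge, counting once more guaranteeing a guard whose departure strands neither a leaf nor a $b_j$.

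The lower bound $\adn\ge\ell+m+n$ is the heart of the matter. Since $\adn\ge\edn=\ell+m+1$ by Lemma~\ref{boundedBelowByEternal}, it suffices to forbid an autonomous family for each size $k$ with $\ell+m+1\le k\le\ell+m+n-1$. Fixing such a $k$ and a putative family $\Fcal$ of this size, I would use the independent dominating set $I=\{a_1,\dots,a_\ell,b_1,\dots,b_m,c_{\ell+1}\}$, whose complement induces the connected clique $\{c_1,\dots,c_\ell,c_{\ell+2},\dots,c_{\ell+n}\}$; Lemma~\ref{lemma:includeIndependent} and Corollary~\ref{cor:supersetOfIndependent} then place every size-$k$ superset of $I$ in $\Fcal$. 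From such a superset I would carry out legal guard moves—walking a guard from $a_1$ onto $c_1$, a guard from $b_m$ onto a second clique vertex and up into the surplus clique, and leaving the remaining excess parked among $c_{\ell+1},\dots,c_{\ell+n}$—to reach
\[
T^*=\{c_1\}\cup\{a_2,\dots,a_\ell\}\cup\{b_1,\dots,b_{m-1}\}\cup\{c_{\ell+1},\dots,c_{k-m+1}\}.
\]
By closure under adjacency $T^*\in\Fcal$, yet $T^*$ is not secure dominating: the only guard adjacent to the vacated vertex $b_m$ is $c_1$, and the move $c_1\to b_m$ leaves the leaf $a_1$, whose only neighbour is $c_1$, undominated. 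This contradicts the fact that members of an autonomous family are secure dominating.

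The main obstacle is this reachability step: I must verify that every intermediate configuration on the way to $T^*$ remains dominating, and in particular that the guard leaving $b_m$ can step onto a clique vertex \emph{other} than $c_1$ before climbing into the surplus clique. This is possible exactly because $b_m$ is adjacent to $c_2,\dots,c_\ell$ as well as to $c_1$, so the argument needs $\ell\ge2$; for $\ell=1$ the configuration $T^*$ becomes isolated in the adjacency graph of dominating sets and a strictly smaller autonomous family exists, so the stated value genuinely requires $\ell\ge2$. The remaining work—checking the counting bounds in the upper-bound case analysis and the domination of each intermediate set in the lower bound—is routine but is where the real content lies.
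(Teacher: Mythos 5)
Your proposal is correct and takes essentially the same route as the paper: the same arguments for $\gamma$ and \edn, the same ``every dominating set of size $\ell+m+n$ is secure dominating'' count via Theorem~\ref{thm:secDomBound} for the upper bound, and the same lower-bound mechanism of invoking Corollary~\ref{cor:supersetOfIndependent} on $I$ and drifting guards into the surplus clique until an indefensible configuration lies in the family (the paper finishes with a series of attacks on $b_1,\dots,b_m$ where you finish with a single unanswerable attack at $b_m$, an immaterial difference). Your caveat that the lower bound needs $\ell\ge 2$ is also correct and is a point the paper silently glosses over: for $\ell=1$ the guards on the $b_j$ cannot enter the clique, one checks that $\adn(\Fcal_{1,m,n})=m+2$, so the stated formula genuinely fails when $\ell=1$ and $n\ge 2$; this does not affect the paper's application in Theorem~\ref{thm:Realizability}, where only $\ell\ge 3$ is used.
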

\begin{proof}
  One vertex of each pair $\{a_i,c_i\}$ must be occupied in order to obtain a dominating set, so that the domination number is at least $\ell$. Provided that at least one guard is stationed at $c_i$, the set is dominating, since each vertex $c_i$ is adjacent to every $b$ and $c$ vertex.

  The set $\{a_1,a_2,\dots,a_\ell,b_1,b_2,\dots,b_m,c_{\ell+1}\}$ is an independent set of size $\ell + m +1$, which gives a lower bound for the eternal domination number. That many guards suffice, since stationary guards can be assigned to each $a_i$ and $b_j$ and one guard can defend the remaining vertices $c_i$ which induce a complete subgraph.

  Let $k$ be a natural number $\ell +m+1 \le k \le \ell +m +n - 1$. Consider a dominating set of size $k$ which contains the independent set $\{a_1,a_2,\dots,a_\ell,b_1,b_2,\dots,b_m,c_{\ell+1}\}$. This is adjacent to one with $c_1$ replacing $a_1$. Then all the guards on the $b_i$ can move to $c_i$ without losing domination. In the end, since the $\ell$ leaves are occupied, as well as interior vertices $\{c_{\ell+1},\dots,c_{\ell + n}\}$, then because $k \le \ell + m + n - 1$ we see that at most $m-1$ of the vertices ${c_1,\dots,c_\ell}$ are occupied. There is then no defense against the series of attacks $b_1$, $b_2$, \dots $b_m$.

  We finally show that $\ell +m +n$ guards suffice for autonomous domination. Consider any dominating set of that size. Since the set is dominating, at least one member of each pair $\{a_i,c_i\}$ is contained in the set. Moreover, either no $c_i$ is in the set, in which case all of the vertices $b_i$ are, or at least one of the $c_i$ is in the set. In the former case secure domination is straightforward: every vertex $a_i$ is occupied, and every vertex $b_i$ is occupied, and there are $n>0$ guards remaining necessarily contained in the set $\{c_i\}$ which induces a complete subgraph. In the latter case, let $j$ be the number of vertices $c_i$ contained in the dominating set $1 \le j \le \ell$. Then $\ell-j$ guards occupy leaves, since the set is dominating. At most $n$ guards are contained in the subset of vertices $\{c_{\ell+1},\dots,c_{\ell + n}\}$, which means at least $\ell+m+n - (j + (\ell-j) +n) = m$ guards are contained in the vertices $b_i$. But there are only $m$ such vertices. Thus the only empty vertices are either among pairs $\{a_i,c_i\}$ (which contain a guard by hypothesis) or are among the vertices $\{c_{\ell+1},\dots,c_{\ell+n}\}$ (which induce a complete subgraph and contain a guard). Thus the set is secure dominating.
\end{proof}

\begin{figure}
  \caption{The graph $\Fcal_{4,3,3}$}
  \centering
  \label{fig:ManyLeavesAndBridges}
\begin{framed}
 \includegraphics[scale=.8,trim=-40 0 -40 0, clip]{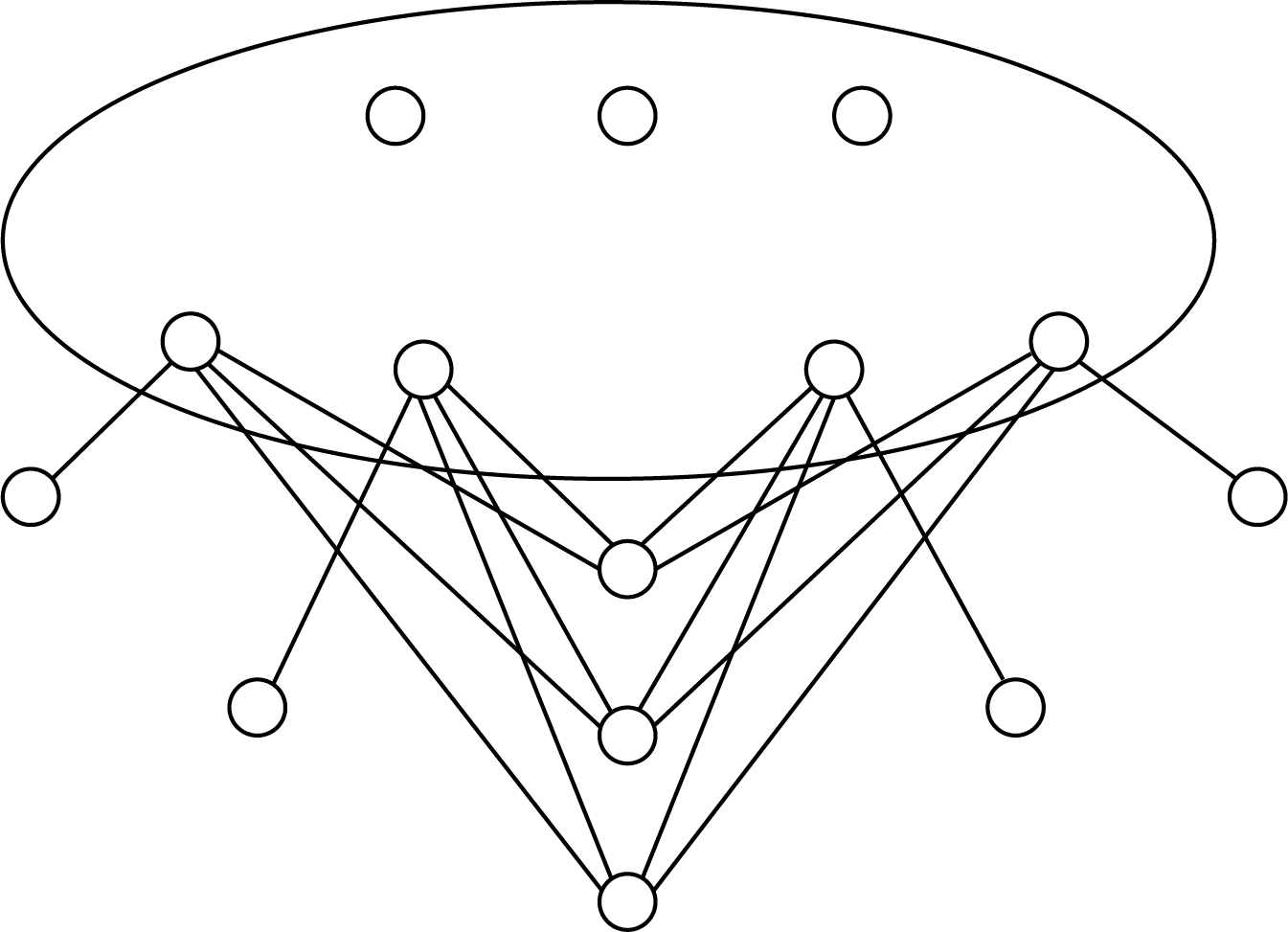}
\end{framed}
\end{figure}

\begin{proposition}
  Suppose that $G$ is a graph and $\edn(G)=1$. Then ${\adn(G)=1}$.
\end{proposition}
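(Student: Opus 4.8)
The plan is to show that the hypothesis $\edn(G)=1$ forces $G$ to be a complete graph, and then to observe that complete graphs have autonomous domination number $1$.

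First I would unpack what an eternal dominating set of size $1$ means, using the description of eternal domination as Definition \ref{autonomousFamilyDefinition} with the third condition dropped. An eternal dominating family of singletons must satisfy the first two conditions. By the first condition, every singleton $\{v\}$ in the family is a dominating set, so $v$ is adjacent to every other vertex; that is, $v$ is universal. Starting from a member $\{v\}$ of the family, the second condition requires that for each $w \in V \setminus \{v\}$ there be a member of the family adjacent to $\{v\}$ and containing $w$. The only set adjacent to $\{v\}$ that contains $w$ is $\{w\}$ itself, and its adjacency to $\{v\}$ forces $w$ adjacent to $v$. Hence $\{w\}$ lies in the family, and by the first condition $w$ is universal as well. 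Ranging over all such $w$ shows that every vertex of $G$ is universal, i.e.\ $G$ is complete.

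Having reduced to the complete case, I would finish by applying Lemma \ref{boundedAboveByFoolproof}. A complete graph on $n$ vertices has minimal degree $\delta = n-1$, so $\adn(G) \le n - \delta = 1$. Since any dominating set is nonempty we have $\adn(G) \ge 1$ (this also follows from Lemma \ref{boundedBelowByEternal} together with the hypothesis), and therefore $\adn(G) = 1$. Alternatively one may exhibit the autonomous family directly: the collection of all singletons $\{v\}$ satisfies all three conditions of Definition \ref{autonomousFamilyDefinition}, since in a complete graph every set adjacent to a singleton is again a singleton and hence dominating, giving closure for free.

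The main obstacle is the first step, establishing that $\edn(G)=1$ implies $G$ is complete; the care required is in correctly reading off the two surviving conditions of the eternal-domination definition and noting that adjacency of singletons propagates universality from one vertex to every other. Once $G$ is known to be complete, the conclusion is immediate from the elementary bounds.
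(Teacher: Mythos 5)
Your proof is correct and follows essentially the same route as the paper: both arguments reduce the statement to showing that $\edn(G)=1$ forces $G$ to be complete, and then conclude $\adn(G)=1$ for complete graphs. The only difference is cosmetic: the paper deduces completeness from the observation that $\edn(G)=1$ rules out an independent set of size $2$, whereas you unpack the family definition directly and propagate universality through adjacent singletons; your write-up of the final step (via Lemma \ref{boundedAboveByFoolproof} or the explicit family of all singletons) is also somewhat more detailed than the paper's, which leaves the complete-graph case implicit.
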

\begin{proof}
  If $\edn(G)=1$ then there is no independent set of vertices of size $2$. Therefore $G$ is necessarily a complete graph.
\end{proof}

The previous proposition is the reason for excluding the case $a=b=1$ and $c \ne 1$ in the following theorem. Otherwise there is no limit on how the various domination numbers can be related, beyond their ordering.

\begin{theorem}
  \label{thm:Realizability}
  Let $a$, $b$, and $c$ be natural numbers such that $a \le b \le c$ and either $c=1$ or $b>1$. Then there is a graph $G$ such that $\gamma(G)=a$, $\edn(G)=b$, and $\adn(G)=c$.
\end{theorem}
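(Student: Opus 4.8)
The plan is to realize each admissible triple $(a,b,c)$ by exhibiting a single graph drawn from the families already constructed, organizing the argument as a case analysis on the difference $b-a$. Because the chain $\gamma \le \edn \le \adn$ holds for every graph, these families were designed precisely so that their parameter triples tile the region $\{(a,b,c): a \le b \le c\}$; the whole task reduces to checking that every admissible triple is hit and that the index parameters used stay within their permitted ranges. The families $\Ccal$ and $\Ecal$ turn out to furnish only alternative realizations of sub-regions also reachable by $\Fcal$, so I would select a minimal sufficient collection of constructions.

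First I would dispatch the generic case $b \ge a+2$ by setting $G = \Fcal_{a,\,b-a-1,\,c-b+1}$. The proposition computing the parameters of $\Fcal_{\ell,m,n}$ gives $\gamma = \ell = a$, $\edn = \ell+m+1 = b$, and $\adn = \ell+m+n = c$. The indices are legitimate: $\ell = a \ge 1$, $m = b-a-1 \ge 1$ (this is exactly where $b \ge a+2$ is used), and $n = c-b+1 \ge 1$ since $c \ge b$. Next comes the case $b = a+1$. For $a \ge 2$ I take $G = \Dcal_{a-1,\,c-a}$, whose parameters $\gamma = m+1$, $\edn = m+2$, $\adn = m+n+1$ evaluate to $(a, a+1, c)$, with $m = a-1 \ge 1$ and $n = c-a \ge 1$ because $c \ge b = a+1$. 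For $a = 1$ (so $b = 2$) I would instead use $G = \Acal_{c-1}$ when $c \ge 3$, giving $\gamma=1$, $\edn=2$, $\adn = c$ with index $c-1 \ge 2$ in range, and realize the single leftover triple $(1,2,2)$ by $G = P_3$, for which $\gamma(P_3)=1$, $\edn(P_3)=2$, and $\adn(P_3)=2$ were recorded in Section~\ref{sec:elementaryExamples}.

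Finally, the case $b = a$. When $c > a$ I set $G = \Bcal_{a-2,\,c-a-1}$; its parameters $\gamma = \edn = m+2$ and $\adn = m+n+3$ give $(a,a,c)$, and the indices $m = a-2 \ge 0$, $n = c-a-1 \ge 0$ are non-negative, using that $a \ge 2$ is forced here (if $a=b=1$, the hypothesis ``$c=1$ or $b>1$'' leaves only $c=1$). When $c = a$ the diagonal triple $(a,a,a)$ is realized by the disjoint union $G = a K_2$: Lemma~\ref{disjointUnion} together with $\adn(K_2)=1$ gives $\adn(G)=a$, the domination number is plainly $a$, and then $\gamma \le \edn \le \adn$ squeezes $\edn(G)=a$ with no further computation. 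This also absorbs $a=b=c=1$.

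The main obstacle is not any single hard estimate — the substantive work resides in the six family propositions already established — but rather the bookkeeping of exhaustiveness and boundary behavior. I would need to confirm that the cases on $b-a$, with their sub-cases, cover every triple with $a \le b \le c$, and, more delicately, that at each boundary the chosen index stays within the family's stated range (non-negative, or $\ge 1$ as required): the tight spots are $b=a+2$ (forcing $m=1$ in $\Fcal$), $c=b$ (forcing $n=1$), $a=2$ with $c=a+1$ in $\Bcal$ (forcing $m=n=0$, which is why $\Bcal$ was stated for non-negative indices), and the two genuinely exceptional triples $(a,a,a)$ and $(1,2,2)$ handled by the ad hoc graphs. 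I would close by verifying that the unique admissible triple escaping every case would be $(1,1,c)$ with $c>1$ — precisely the triple excluded by hypothesis and ruled out by the preceding proposition — which confirms that the case analysis is both complete and sharp.
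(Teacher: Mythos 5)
Your reorganization by $b-a$ is in the same spirit as the paper's proof (assembling the precomputed families $\Acal,\Bcal,\Dcal,\Fcal$ plus small ad hoc graphs), and parts of it are genuinely cleaner: the uniform choice $\Bcal_{a-2,c-a-1}$ for $a=b<c$, and the diagonal $(a,a,a)$ via $G=aK_2$ with Lemma~\ref{disjointUnion}, actually cover triples that the paper's own case list handles awkwardly or misses (the paper's recipe $\Bcal_{b-2,c-b-1}$ is undefined when $c=b$). However, there is a genuine gap in your generic case: you apply the $\Fcal$ proposition with $\ell=a=1$, and the conclusion of that proposition is false at $\ell=1$. This is precisely the regime the paper avoids: its proof reserves $\Fcal$ for $a\ge 3$ and uses $\Ccal_{b-1,c-b+1}$ when $a=1$, $b>2$. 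Your assertion that $\Ccal$ ``only furnishes alternative realizations of sub-regions also reachable by $\Fcal$'' is therefore wrong, and the triples $(1,b,c)$ with $3\le b<c$ are not realized by your construction.

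Concretely, in $\Fcal_{1,m,n}$ every leaf ($a_1$ and each $b_i$) has $c_1$ as its \emph{unique} neighbor, so $c_1$ is at once the only escape route for leaf guards and the only possible defender of a vacant leaf. Consider the family of all dominating sets of size $m+2$ of two types: all $m+1$ leaves plus one clique vertex, or $m$ leaves plus $c_1$ plus one other clique vertex. One checks directly that this family satisfies all three conditions of Definition~\ref{autonomousFamilyDefinition}: a leaf guard can move only onto $c_1$ (hence only when $c_1$ is vacant, producing the second type), and once a leaf is vacant the guard at $c_1$ can never legally move except back onto a leaf, so no reachable configuration ever has two vacant leaves; attacks on a vacant leaf are answered by $c_1$, and attacks inside the clique by the other clique guard. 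Hence $\adn(\Fcal_{1,m,n})\le m+2$, and since $\{a_1,b_1,\dots,b_m,c_2\}$ is independent, $\adn\ge\edn\ge m+2$, so $\adn(\Fcal_{1,m,n})=m+2$, not $m+n+1$. Thus for $a=1$, $b\ge 3$, $c>b$ your graph $\Fcal_{1,b-2,c-b+1}$ has $\adn=b$, not $c$. The drain argument underlying the $\Fcal$ proposition needs $\ell\ge2$: one hub $c_1$ must be pinned as an anchor by vacating $a_1$ while a second hub $c_2$ serves as the gateway through which guards leak into the clique; at $\ell=1$ the anchor and the gateway coincide. The repair is exactly the paper's choice: for $a=1$, $b\ge3$ use $\Ccal_{b-1,c-b+1}$, whose \emph{two} universal vertices restore the leak. (Your use of $\Fcal_{2,b-3,c-b+1}$ for $a=2$, $b\ge4$ is sound --- the drain does work at $\ell=2$ --- so $\Ecal$ really is dispensable; $\Ccal$ is not.)
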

\begin{proof}
  It is necessary only to put together the examples collected previously. We organize the cases in a sort of lexicographic order, thinking of the sizes of $a$, $b$, and $c$. There are some extraordinary situations for small parameter values, leading to a number of cases.

\textit{Case $c=1$:} Then $K_n$ is such a graph.

\textit{Case $a=1$ and $b=2$}
If $c > 2$, then $\Acal_{c-1}$ is such a graph.
If $c=2$, then $P_3$ is such a graph.

\textit{Case $a=1$ and $b>2$}

The graph $\Ccal_{b-1,b-1 + c}$ is such a graph.

\textit{Case $a=2$ and $b=2$}
If $c=2$ then $P_4$ is such a graph.
If $c>2$ then $\Bcal_{0,c-1}$ is such a graph.

\textit{Case $a=2$ and $b=3$}
If $c=3$, then $K_{2,3}$ is such a graph.
If $c>3$, then $\Dcal_{1,c-2}$ is such a graph.

\textit{Case $a=2$ and $b > 3$}
If $b=c$, then $K_{2,b}$ is such a graph.
If $b<c$, then $\Ecal_{c-b,b-3}$ is such a graph.

\textit{Case $a \ge 3$ and $b=a$}
The graph $\Bcal_{b-2, c-b-1}$ is such a graph.

\textit{Case $a \ge 3$ and $b=a+1$}
The graph $\Dcal_{a-1,c-a}$ is such a graph.

\textit{Case $a \ge 3$ and $b=a\ge 2$}
The graph $\Fcal_{a,b-a-1,c-b+1}$ is such a graph.

\end{proof}

\section{Acknowledgements}

I thank Professor Kiran Bhutani for helpful remarks, which include her suggestion of the realizability question which is answered in Theorem~\ref{thm:Realizability}.

\bibliographystyle{plain}

\begin{thebibliography}{10}
\bibitem{IODG}
    {\scshape Burger, A. P and Cockayne, E. J. and Gr\"{u}ndlingh, W. R. and Mynhardt, C. M. and van Vuuren, J. H. and Winterbach, W.}
     \newblock {Infinite order domination in graphs},
     \newblock {\em J. Combin. Math. Combin. Comput.}, {\bf 50} (2004) 179--194.

   \bibitem{ESG}
     {\scshape Goddard, W. and Hedetniemi, S. and Hedetniemi, S.} 
     \newblock  Eternal Security in Graphs.
     \newblock {\em J. Combin. Math. Combin. Comput.}, {\bf 52} (2005) 160--180.

   \bibitem{TB}
     {\scshape Goldwasser, Klostermeyer}
     \newblock Tight Bounds for Eternal Dominating Sets in Graphs.
     \newblock {\em Discrete Mathematics} {\bf 308} 12 (2008) 2589--2593.

   \bibitem{FDR}
     {\scshape Haynes, Teresa W. and Hedetniemi, Stephen T. and Slater, Peter J.}
     \newblock Fundamentals of domination in graphs
     \newblock Monographs and Textbooks in Pure and Applied Mathematics, 208. 1998. Marcel Dekker, Inc., New York.

     
   \bibitem{FED}
     {\scshape Klostermeyer, MacGillivray}
     \newblock Foolproof Eternal Domination in the All-Guards Move Model.
     \newblock{\em Mathematica Slovaca} \textbf{52} (2012) No. 4 595-610.

   \end{thebibliography}

\end{document}